\newcolumntype{A}[1]{>{\rule{0pt}{4ex}}p{#1cm\arrayrulewidth-2\tabcolsep}%
	                     <{\vspace{\tabcolsep}}}%
\newcolumntype{B}[1]{>{\rule{0pt}{4ex}}m{#1cm\arrayrulewidth-2\tabcolsep}%
	                     <{\vspace{\tabcolsep}}}%
\theoremstyle{plain}
\newtheorem{theorem}{Theorem}
\newtheorem{lemma}[theorem]{Lemma}
\newtheorem{prop}[theorem]{Proposition}
\newtheorem{corollary}[theorem]{Corollary}
\newtheorem{definition}[theorem]{Definition}
\theoremstyle{remark}
\newtheorem{remark}[theorem]{Remark}
\numberwithin{theorem}{section}
\newcommand{\A}{\mathbb{A}}
\newcommand{\B}{\mathbb{B}}
\newcommand{\D}{\mathbb{D}}
\newcommand{\E}{\mathbb{E}}
\newcommand{\HH}{\mathbb{H}}
\newcommand{\Q}{\mathbb{Q}}
\newcommand{\Z}{\mathbb{Z}}
\newcommand{\SN}{\theta}
\newcommand{\Aut}{\textnormal{Aut}}
\newcommand{\PSL}{\textnormal{PSL}}
\newcommand{\GL}{\textnormal{GL}}
\newcommand{\GO}{\textnormal{O}}
\newcommand{\Nr}{\textnormal{Nr}}
\newcommand{\SL}{\textnormal{SL}}
\newcommand{\Sp}{\textnormal{Sp}}
\newcommand{\PSp}{\textnormal{PSp}}
\newcommand{\Stab}{\textnormal{Stab}}
\newcommand{\MO}{\mathcal{M}}
\newcommand{\isom}{\cong}
\newcommand{\tensor}{\otimes}
\newcommand{\Syl}{\textnormal{Syl}}
\newcommand{\Alt}{\textnormal{Alt}}
\newcommand{\fraka}{\mathfrak{a}}
\newcommand{\calB}{\mathcal{B}}
\newcommand{\calC}{\mathcal{C}}
\newcommand{\calG}{\mathcal{G}}
\newcommand{\calO}{\mathcal{O}}
\newcommand{\calQ}{\mathcal{Q}}
\newcommand{\calL}{\mathcal{L}}
\newcommand{\calD}{\mathcal{D}}
\newcommand{\frakp}{\mathfrak{p}}
\newcommand{\frakq}{\mathfrak{q}}
\newcommand{\frakA}{\mathfrak{A}}
\newcommand{\frakP}{\mathfrak{P}}
\newcommand{\frakQ}{\mathfrak{Q}}
\newcommand{\U}{\textnormal{U}}
\newcommand{\SU}{\textnormal{SU}}
\newcommand{\genus}{\textnormal{genus}}
\newcommand{\bigperp}{%
  \mathop{\mathpalette\bigp@rp\relax}%
  \displaylimits
}
\newcommand{\bigp@rp}[2]{%
  \vcenter{
    \m@th\hbox{\scalebox{\ifx#1\displaystyle2.1\else1.5\fi}{$#1\perp$}}
  }%
}
\begin{document}

\bibliographystyle{plain}

\title{One class genera of lattice chains over number fields}
\author{Markus Kirschmer}
\email{markus.kirschmer@math.rwth-aachen.de}
\author{Gabriele Nebe}
\email{nebe@math.rwth-aachen.de}
\address{Lehrstuhl D f\"ur Mathematik, RWTH Aachen University, 52056 Aachen, Germany}
%\date{}

\begin{abstract}
We classify all one-class genera of admissible 
lattice chains of length at least $2$ in hermitian spaces over number fields.
\end{abstract}

\maketitle

\section{Introduction}

Kantor, Liebler and Tits \cite{KLT} 
classified discrete groups $\Gamma $ with a type preserving chamber transitive action
on the affine building $\calB ^+$ of a simple adjoint algebraic group
of relative rank $r\geq 2$. 
Such groups are very rare and hence this situation is
an interesting phenomenon.
Except for two cases in characteristic 2 (\cite[case (v)]{KLT}) and 
the exceptional group $\textnormal{G}_2(\Q_2) $ (\cite[case (iii)]{KLT}, Section \ref{g2}) 
the groups arise from classical groups $\U_p$  over $\Q_p$ for $p=2,3 $. 
Moreover $\Gamma $ is a subgroup of the $S$-arithmetic group 
$\Gamma _{max} := \Aut (L \otimes _{\Z } \Z [\frac{1}{p} ]) $  (so $S = \{ p \}$) 
for a suitable lattice $L$ in some hermitian space $(V,\Phi )$ and 
$\U_p = \U (V_p,\Phi )$ is the completion of the unitary group $\U(V,\Phi )$ 
(see Remark \ref{completiongroup}). 
This paper uses  the classification of one-  and two-class genera of hermitian lattices
in \cite{Kirschmer} to obtain 
these $S$-arithmetic groups  $\Gamma _{max} $.

Instead of the thick building $\calB^+$ 
we start with  the affine building $\calB$ of admissible lattice chains 
as defined in \cite{AN}.  
The points in the building ${\mathcal B}$  correspond to 
homothety classes of certain $\Z _p$-lattices in $V_p $.
The lattices form a simplex in ${\mathcal B} $, if and only if 
representatives in these classes can be chosen to form an 
admissible chain of lattices in $V_p$. 
In particular the maximal simplices of ${\mathcal B} $ (the so called chambers) 
correspond to the fine admissible lattice 
chains in $V_p$ (for the thick building 
$\calB ^+ $ one might have to apply the oriflamme construction
as explained in Remark \ref{oriflamme}).

Any fine admissible lattice chain $\calL _p$ in $V_p$ arises as 
the completion of a  lattice chain $\calL '$ in $(V,\Phi )$.  
After rescaling and applying the reduction operators from Section \ref{normalizedgenera} 
we obtain a fine $p$-admissible lattice chain 
${\mathcal L} = (L_0,\ldots , L_r )$  in 
$(V,\Phi )$ (see Definition \ref{admissible}) such that 
$\Aut ({\mathcal L}) \supseteq \Aut (\calL ') $ and such that 
the completion of $\calL$ at $p$ is $\calL_p$. 
The $S$-arithmetic group $\Aut(L_0 \otimes \Z[\frac{1}{p} ] ) = \Aut(L_i \otimes \Z[\frac{1}{p}]) 
 =: \Aut(\calL \otimes \Z[\frac{1}{p}] ) $  contains $\Aut (\calL' \otimes \Z[\frac{1}{p}] )$. 
Therefore we call this group closed. 

The closed $\{ p \} $-arithmetic  group  $\Aut(L_0 \otimes \Z[\frac{1}{p} ] ) $ 
acts chamber transitively on ${\mathcal B}$, if the  lattice $L_0$ represents a genus of class 
number one 
and $\Aut (L_0 )$ acts transitively on the fine flags 
of (isotropic) subspaces in the hermitian space $\overline{L_0}$ (see Theorem \ref{main}). 
If we only impose chamber transitivity on 
the thick building $\calB ^+$, then 
 we also have to take two-class genera of lattices $L_0$ into account. 
To obtain a complete classification of all chamber transitive actions of 
closed $S$-arithmetic groups on the thick building $\calB ^+$ 
we would have to apply our strategy to the still unknown classification of 
proper special genera of lattices $L_0$ with class number one (see Theorem \ref{classtwo}). 

Already taking only the one-class genera of lattices $L_0$ we find 
all the groups from \cite{KLT} and one additional case (described in Proposition \ref{hf} (1)).
 Hence our computations 
confirm and supplement the classification of \cite{KLT}. 
A list of the corresponding buildings and groups $\U_p $ is 
given in Section \ref{buildtable}.

\section{Lattices in hermitian spaces} 

Let $K$ be a number field. 
Further, let $E/K$ be a field extension of degree at most $2$ or let $E$ be a quaternion skewfield over $K$. 
The canonical involution of $E/K$ will be denoted by $\sigma \colon E \to E$. In particular, $K$ is the fixed field of 
$\sigma$ and hence the involution $\sigma$ is the identity if and only if $K=E$. 
A hermitian space over $E$ is a finitely generated (left) vector space $V$ over $E$ equipped with a non-degenerate
sesquilinear form $\Phi \colon V \times V \to E$ such that

\begin{itemize}
  \item $\Phi(x+x', y) = \Phi(x,y) + \Phi(x', y)$ for all $x,x',y \in V$.
  \item $\Phi(\alpha x, \beta y) = \alpha \Phi(x,y) \sigma(\beta)$ for all $x,y \in V$ and $\alpha, \beta \in E$.
  \item $\Phi(y,x) = \sigma(\Phi(x, y))$ for all $x,y \in V$.
\end{itemize}

The unitary group 
$\U (V,\Phi )  $ 
 of $\Phi $ is the group of all $E$-linear endomorphisms of $V$ that 
preserve the hermitian form $\Phi$.
The {\em special unitary group} is defined as 
$$\SU(V,\Phi ):=  \{ g\in \U(V,\Phi ) \mid \det(g) = 1 \} $$ 
if $E$ is commutative and $\SU(V,\Phi ):= \U(V,\Phi) $ if $E$ is a 
quaternion algebra. 

We denote by $\Z_K$ the ring of integers of the field $K$ and we fix some maximal order $\MO$ in $E$. Further, let $d$ be the dimension of $V$ over $E$.

\begin{definition}
	An {\em $\MO $-lattice} in $V$ is a finitely generated 
	$\MO $-submodule of $V$ that contains an $E$-basis of $V$. 
	If $L$ is an $\MO $-lattice in $V$ then its {\em automorphism group} 
	is 
	$$\Aut(L):= \{ g\in \U(V,\Phi ) \mid Lg = L \} .$$
	%Similarly for any ring $R$ with $\MO \subseteq R \subseteq E$
	%and any $R$-lattice $L$ in $V$ we define 
	%$\Aut(L) $ as the stabiliser in $\U(V,\Phi )$ of $L$ and 
	%the {\em proper automorphism group} 
	%$$\Aut^+(L) := \{ g\in \Aut(L) \mid \det(g) = 1\}.$$
\end{definition}

\subsection{Completion of lattices and groups} 

Let $\frakP$ be a maximal two sided ideal of $\MO$ and let $\frakp = \frakP \cap K$. 
The completion $\U_{\frakp} := \U(V\otimes _K K_{\frakp } ,\Phi )$ is an algebraic group over the $\frakp$-adic completion $K_{\frakp }$ of $K$. 

Let $L\leq V$ be some $\MO $-lattice in $V$. 
We define the $\frakp $-adic completion of $L$ as $L_{\frakp }:=L\otimes _{\Z_K} \Z_{K_{\frakp}} $ and  we let
 $$L(\frakp):=\{ X\leq V \mid X_{\frakq} = L_{\frakq } \mbox{ for
 all prime ideals } \frakq \neq \frakp \}, $$
  be the set of all $\MO $-lattices in $V$ whose $\frakq $-adic 
 completion coincides with the one of $L$ for all prime ideals $\frakq \neq \frakp$. 

 \begin{remark} \label{localglobal} 
	 By the local global principle, given a lattice $X$ in $V_{\frakp }$,
	 there is a unique lattice $M \in L(\frakp) $ with $M_{\frakp} = X $.
 \end{remark}

To describe the groups $\U_{\frakp}$ in the respective cases, 
we need some notation: 
Let $R$ be one of $E,K,\Z_K ,\MO $ or a suitable completion. 
A hermitian module $\HH (R)$ with $R$-basis $(e,f)$ satisfying 
$\Phi (e,f) = 1 , \Phi(e,e) = \Phi(f,f) = 0 $ is called a 
{\em hyperbolic plane}.
By \cite[Theorem (2.22)]{Kneser} any  hermitian space  over  $E$ 
is either anisotropic (i.e. 
$\Phi (x,x) \neq 0 $ for all $x\neq 0$) or has a hyperbolic plane as an orthogonal
direct summand.

\begin{remark}\label{completiongroup} 
In our situation the following cases are possible:
\begin{itemize}
	\item $E=K$: Then $(V\otimes _K K_{\frakp } ,\Phi ) $ is a quadratic space and hence isometric to 
		$\HH (K_{\frakp } ) ^r \perp (V_0,\Phi _0)$ with $(V_0,\Phi_0)$ anisotropic. 
		The rank of $\U_{\frakp} $ is $r$.  
		The group that acts type preservingly on the 
		thick Bruhat-Tits building $\calB ^+$  defined in Section \ref{Secoriflamme} is 
		$$\U^+_{\frakp }:= \{ g\in \U_{\frakp} \mid \det (g) = 1, \SN (g) \in K^2 \} $$
		the subgroup of the special orthogonal group with trivial spinor norm $\theta$. 
	\item $\frakP \neq \sigma(\frakP)$. Then $E\otimes _K K_{\frakp} \cong K_{\frakp} \oplus K_{\frakp }$ 
		where the involution interchanges the two components and $\U_{\frakp} \cong \GL _d (K_{\frakp}) $ has
		rank $r=d-1$.
As $\frakP$ is assumed to be a maximal $2$-sided ideal of $\MO $, the case that 
		$E$ is a quaternion algebra is not possible here. 
		Here we let $$\U^+_{\frakp } = \{ 
			 g\in \U_{\frakp} \mid \det (g) = 1 \} 
			 = \SL _d(K_{\frakp}) .$$
	\item $[E:K] =4$ and $\frakP =\frakp \MO$. Then $E_\frakp \isom K_\frakp^{2 \times 2}$ and for $x \in E_\frakp$,  $\sigma(x)$ is simply the adjugate of $x$ as $\sigma(x) x \in K$.
		Let $e^2=e \in E_{\frakp}  $ such that  $\sigma(e) = 1-e$. Then
	      $V_\frakp = e V_\frakp \bigoplus (1-e) V_\frakp$. 
	      The hermitian form $\Phi$ gives rise to a skew-symmetric form 
	      \begin{align*}  \Psi \colon eV_\frakp \times eV_\frakp &\to e E_\frakp (1-e) \isom K_\frakp,\\ (ex, ey) &\mapsto \Phi(ex,ey) = e \Phi(x,y) (1-e) \:. \end{align*}
	      From $E_\frakp = E_\frakp e E_\frakp$ we conclude that $V_\frakp = E_\frakp e E_\frakp V$.
	      Hence we can recover the form $\Phi$ from $\Psi$ and thus $\U_\frakp \isom \U(eV, \Psi) \isom \Sp_{2d}(K_\frakp)$ has rank $r=d$. 
	      Here the full group $\U_{\frakp }$ acts type preservingly on 
	      $\calB^+$ and we put $\U^+_{\frakp } := \U_{\frakp } $.
	\item %$[E: K]=2$ and $\frakP = \sigma(\frakP )$ 
	      In the remaining cases $E\otimes K_{\frakp } = E_{\frakP }$ is a skewfield, 
		which is ramified over $K_{\frakp} $ if and only if $\frakP ^2 = \frakp \MO $. 
		In all cases $\U_{\frakp} $ is isomorphic to a unitary group over $E_{\frakP} $. 
		Hence it admits a decomposition $\HH(E_\frakP)^r \perp (V_0,\Phi _0)$ with $(V_0,\Phi_0)$ anisotropic
		where $r$ is the rank of $\U_{\frakp} $. If
		$E_{\frakp } $ is commutative, we define 
		$$\U^+_{\frakp }:= \{ g \in \U_{\frakp } \mid \det(g) =1 \} 
		= \SU_{\frakp } $$
		and put $\U^+_{\frakp } = \SU _{\frakp} := \U _{\frakp }$ in the non-commutative
		case.
\end{itemize} 
\end{remark}

\subsection{The genus of a lattice} 

To shorten notation, we introduce the adelic ring $A= A(K) = \prod_v K_v$ where
$v$ runs over the set of all places of $K$.
We denote the  adelic unitary group of the
$A\tensor _K E$-module $V_A = A \tensor_K V$ by $\U (V_A, \Phi )$. 
The normal subgroup 
$$\U ^+(V_A , \Phi ) := \{ (g_{\frakp } ) _{\frakp} \in \U (V_A, \Phi ) \mid 
g_{\frakp } \in \U^+_{\frakp } \} \leq \U (V_A, \Phi ) $$
is called the special adelic unitary group.

The adelic unitary group acts on the set of all $\MO $-lattices in $V$ 
by letting $Lg = L'$ where $L'$ is the unique lattice in $V$ such that 
its $\frakp $-adic completion $(L')_\frakp = L_{\frakp} g_{\frakp}$
for all maximal ideals $\frakp $ of $\Z _K$.

\begin{definition}
Let $L$ be an $\MO$-lattice in $V$. Then
\[ \genus(L):= \{ Lg \mid g \in \U (V_A, \Phi )  \} \]
is called the \emph{genus} of $L$. \\
Two lattices $L$ and $M$ are said to be \emph{isometric} (respectively {\em properly isometric}), if $L = Mg$ for some $g \in \U(V,\Phi)$
(resp. $g\in \SU(V,\Phi ) $).
\\
Two lattices $L$ and $M$  are said to be in the same 
{\em proper special genus}, 
if there exist $g\in \SU(V,\Phi )$ and $h\in \U^+(V_A,\Phi )$ 
such that $Lgh = M$.
\end{definition}

Let $L$ be an $\MO$-lattice in $V$.
It is well known that $\genus(L)$ is a finite union of isometry classes, c.f.  \cite[Theorem 5.1]{Borel_finite}.
The number of isometry classes in $\genus(L)$ is called the class number 
$h(L) $ of (the genus of) $L$.
Similarly the proper special genus is a finite union of proper isometry
classes, the proper class number will be denoted by $h^+(L)$.

\subsection{Normalized genera} \label{normalizedgenera}

\begin{definition}
Let $L$ be an $\MO$-lattice in $V$. Then $L^\# = \{x \in V \mid \Phi(x,L) \subseteq \MO \}$ is called the \emph{dual} lattice of $L$.
If $\frakp$ is a maximal ideal of $\Z_K$, then the unique \mbox{$\MO$-lattice} $X
\in L(\frakp )$ such that
$ X_\frakp = L_\frakp^\# $
is called the \emph{partial dual of $L$ at $\frakp$}. It will be denoted by $L^{\#, \frakp}$.
\end{definition}

\begin{definition}\label{squarefree}
Let $L$ be an $\MO$-lattice in $V$.
Further, let $\frakP$ be a maximal two sided ideal of $\MO$ and set $\frakp = \frakP \cap K$.
If $E_\frakp \isom K_\frakp \oplus K_\frakp$ then $L_\frakp$ is called \emph{square-free} if $L_\frakp = L_\frakp^\#$.
In all other cases, $L_\frakp$ is called square-free if $\frakP L_\frakp^\#  \subseteq L_\frakp \subseteq L_\frakp^\#$.
The lattice $L$ is called square-free if $L_\frakp$ is square-free for all maximal ideals $\frakp$ of $\Z_K$.
\end{definition}

Given a maximal two sided ideal $\frakP$ of $\MO$, we define an operator $\rho_\frakP$ on the set of all $\MO$-lattices as follows:
\[ \rho_\frakP(L) = 
\begin{cases}
L + (\frakP^{-1} L \cap L^\#) & \text{if }\frakP \ne \sigma(\frakP),\\
L + (\frakP^{-1} L \cap \frakP L^\#) & \text{otherwise.}
\end{cases} \]
The operators generalize the maps defined by L. Gerstein in \cite{Gerstein} for quadratic spaces.
They are similar in nature to the \emph{$p$-mappings} introduced by G.~Watson in \cite{pMap}.
The maps satisfy the following properties:

\begin{remark}\label{RhoRemark} Let $L$ be an $\MO$-lattice in $V$. Let $\frakP$ be a maximal two sided ideal of~$\MO$ and set $\frakp= \frakP \cap \Z_K$.
\begin{enumerate}
	\item  $\rho_\frakP(L) \in L(\frakp )$.
\item If $L_\frakp$ is integral, then $(\rho_\frakP(L))_\frakp = L_\frakp \iff L_\frakp$ is square-free.
\item If $\frakQ$ is a maximal two sided ideal of $\MO$, then $\rho_{\frakP} \circ \rho_\frakQ = \rho_{\frakQ} \circ \rho_\frakP$.
\item If $L$ is integral, there exist a sequence 
	of not necessarily distinct
	maximal two sided ideals $\frakP_1, \dots, \frakP_s$ of $\MO$ such that
\[ L':= (\rho_{\frakP_1} \circ \ldots \circ \rho_{\frakP_s})(L) \]
is square-free. Moreover, the genus of $L'$ is uniquely determined by the genus of $L$.
\end{enumerate}
\end{remark}
 
\begin{prop}
Let $L$ be an  $\MO$-lattice in $V$ and let $\frakP$ be a maximal two sided ideal of~$\MO$.
Then the class number of $\rho_\frakP(L)$ is at most the class number of $L$.
\end{prop}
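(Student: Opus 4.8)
The plan is to show that $\rho_\frakP$ induces a surjection from the set of isometry classes in $\genus(L)$ onto the set of isometry classes in $\genus(\rho_\frakP(L))$. Since both genera consist of finitely many isometry classes, a surjection between these finite sets immediately yields $h(\rho_\frakP(L)) \le h(L)$, which is the claim.

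The heart of the matter is the equivariance identity
\[ \rho_\frakP(Lg) = \rho_\frakP(L)\,g \qquad \text{for all } g \in \U(V_A,\Phi), \]
which I would prove locally and then assemble via the local-global principle of Remark \ref{localglobal}. At each prime $\frakq \ne \frakp$ both sides have completion $L_\frakq g_\frakq$, because $\rho_\frakP(L) \in L(\frakp)$ by Remark \ref{RhoRemark}(1). At $\frakp$ it remains to verify that the local operator commutes with the action of $g_\frakp \in \U_\frakp$. As $g_\frakp$ is an $E_\frakp$-linear bijection preserving $\Phi$, it commutes with scaling by the two-sided ideal $\frakP$, it preserves sums and intersections of lattices (being bijective), and it satisfies $(L_\frakp g_\frakp)^\# = L_\frakp^\#\, g_\frakp$. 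Feeding these three facts into the defining formula for $\rho_\frakP$ --- in whichever of its two branches applies --- gives $\rho_\frakP(L_\frakp g_\frakp) = \rho_\frakP(L_\frakp)\, g_\frakp$, and the identity follows.

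Granting the equivariance, the conclusion is short. Restricting to the diagonal image of the global group $\U(V,\Phi) \le \U(V_A,\Phi)$ shows that isometric lattices $M$ and $Mh$ are mapped to the isometric lattices $\rho_\frakP(M)$ and $\rho_\frakP(M)\,h$; hence $M \mapsto \rho_\frakP(M)$ is well defined on isometry classes. Applying the identity to the full adelic group shows that every lattice in $\genus(\rho_\frakP(L))$ has the form $\rho_\frakP(L)\,g = \rho_\frakP(Lg)$ with $Lg \in \genus(L)$, so the induced map on isometry classes is surjective. Consequently the number of isometry classes cannot grow under $\rho_\frakP$.

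The only genuine obstacle is the local computation at $\frakp$, and even there the work is purely bookkeeping: one must check that forming duals, scaling by $\frakP$, and taking the sum and intersection occurring in $\rho_\frakP(L_\frakp)$ are each compatible with the $\U_\frakp$-action. Since every one of these operations is preserved by a $\Phi$-isometry that is also an $E_\frakp$-module automorphism, the two branches ($\frakP \ne \sigma(\frakP)$ and $\frakP = \sigma(\frakP)$) require no separate ideas beyond substituting the correct ideal factors.
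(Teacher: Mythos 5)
Your proof is correct and takes essentially the same route as the paper's: both rest on the equivariance $\rho_\frakP(Lg) = \rho_\frakP(L)\,g$ for $g \in \U(V_A,\Phi)$, which the paper justifies in one line by observing that $\rho_\frakP$ is built from sums, intersections, scalings and duals, all compatible with (adelic) isometries. Your write-up merely makes explicit the local verification at $\frakp$ and the induced surjection on isometry classes that the paper's ``the result follows'' leaves implicit.
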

\begin{proof}
The definition of $\rho_\frakP(L)$ only involves taking sums and intersections of multiples of $L$ and its dual.
Hence $ \rho_\frakP(L)g = \rho_\frakP (Lg) $ for all $g \in \U(V, \Phi)$ and similar for $g \in \U(V_A, \Phi)$.
In particular, $\rho_\frakP$ maps lattices in the same genus (isometry class) to ones in the same genus (isometry class).
The result follows.
\end{proof}

\begin{definition}
Let $\frakA$ be a two sided $\MO$-ideal.
An $\MO$-lattice $L$ is called $\frakA$-maximal, if $\Phi(x,x) \in \frakA$ for all $x \in L$ and no proper overlattice of $L$ has that property.
Similarly, one defines maximal lattices in $V_\frakp$ for a maximal ideal $\frakp$ of $\Z_K$.
\end{definition}

\begin{definition}\label{normalised} 
Let $\frakP$ be a maximal two sided ideal of~$\MO$ and set $\frakp = \frakP \cap K$.
We say that an $\MO $-lattice 
$L$ is  $\frakp$-\emph{normalized} if $L$ satisfies the following conditions:
\begin{itemize}
\item $L$ is square-free.
\item If $E=K$ then $L_\frakp \isom \HH(\Z_{K _\frakp})^r \perp M_0$ 
	where $M_0 = \rho_\frakp^\infty(M)$ and $M$ denotes a \mbox{$2\Z_{K_\frakp}$-maximal} lattice in an anisotropic quadratic space over $K_\frakp$.
\item If $E_\frakp/K_\frakp$ is a quadratic field extension with different $\calD(E_\frakp/K_\frakp)$, 
then $L_\frakp \isom \HH(\MO _\frakp)^r \perp M_0 $ where $M_0 = \rho_\frakp^\infty(M)$ and $M$ denotes a $\calD(E_\frakp/K_\frakp)$-maximal lattice in an anisotropic hermitian space over $E_\frakp$.
\item If $[E:K] = 4$, then $L_\frakp = L_\frakp^\#$.
\end{itemize}
Here $\rho_\frakP^\infty(M)$ denotes the image of $M$ under repeated application of $\rho_\frakP$ until this process becomes stable.
\end{definition}

\begin{remark}
Let $\frakP, \frakp$ and $L$ be as in Definition \ref{normalised}. Then the isometry class of $L_\frakp$ is uniquely determined by $(V_\frakp,\Phi)$.
\end{remark}
\begin{proof}
There is nothing to show if $[E:K]=4$. Suppose now $E=K$.
The space $K M_0$ is a maximal anisotropic subspace of $(V_\frakp, \Phi)$. By Witt's theorem \cite[Theorem 42:17]{OMeara} its isometry type is uniquely determined by $(V_\frakp, \Phi)$.
Further, $M_0$ is the unique $2\Z_{K_\frakp}$-maximal $\Z_{K_\frakp}$-lattice in $K M_0$, see \cite[Theorem 91:1]{OMeara}.
Hence the isometry type of $\rho_\frakp^{\infty}(M_0)$ depends only on $(V_\frakp,\Phi)$.
The case $[E:K]=2$ is proved similarly.
\end{proof}

\section{Genera of lattice chains} 

\begin{definition}
	Let $\calL := (L_1,\ldots , L_m )$ and $\calL' := (L'_1,\ldots ,L'_m)$  
	be two $m$-tuples of $\MO$-lattices in $V$. 
	Then  $\calL $ and $\calL'$ are 
	{\em isometric}, if there is some $g\in \U (V,\Phi )$ 
	such that $L_ig = L_i' $ for all $i=1,\ldots,m $. 
	They are in the same {\em genus} 
	if there is such an element $g\in \U (V_A,\Phi )$. 
	Let 
	$$[\calL ] := \{ \calL' \mid \calL' \mbox{ is isometric  to } \calL \} $$
	and 
$$\genus (\calL) :=  \{ \calL' \mid \calL' \mbox{ and  } \calL \mbox{ are in the same genus}  \} $$
denote the isometry class and the genus of $\calL $, respectively.
	The {\em automorphism group} of $\calL $ is the stabilizer of $\calL $ 
	in $\U(V,\Phi )$, i.e.
	$$\Aut (\calL ) = \bigcap _{i=1}^m \Aut (L_i ) .$$
\end{definition}

It is well known \cite[Theorem 5.1]{Borel_finite}  that any genus of a single lattice 
contains only finitely many isometry classes. 
This is also true for finite tuples of lattices in $V$:

\begin{lemma} 
	Let $\calL = (L_1,\ldots , L_m )$ be an  $m$-tuple of $\MO$-lattices in $V$.
	Then $\genus (\calL) $ is the disjoint union of finitely 
	many isometry classes.
	The number of isometry classes in $\genus(\calL ) $ is 
	called the class number of $\calL $.
\end{lemma}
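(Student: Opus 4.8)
The plan is to reduce the statement for an $m$-tuple $\calL = (L_1,\ldots,L_m)$ to the already-known finiteness result for a single lattice. The key observation is that a genus of $\MO$-lattices is a local notion: two tuples lie in the same genus precisely when their completions agree up to local isometry at every place, and this is governed by the completions of the individual lattices $L_i$. So I would first package the tuple into a single object whose genus controls that of $\calL$.

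\begin{proof}
Consider the $\MO$-lattice $M := L_1 \oplus \cdots \oplus L_m$ in the orthogonal sum $W := V^m$, equipped with the hermitian form $\Phi^{\oplus m}$. First observe that if $g \in \U(V_A,\Phi)$ satisfies $L_i g = L_i'$ for all $i$, then the diagonally embedded element $\mathrm{diag}(g,\ldots,g) \in \U(W_A, \Phi^{\oplus m})$ maps $M$ to $M' := L_1' \oplus \cdots \oplus L_m'$. Hence the map $\calL \mapsto M$ sends a genus of tuples into the genus $\genus(M)$ of a single lattice in $W$. By \cite[Theorem 5.1]{Borel_finite}, $\genus(M)$ is a finite union of isometry classes. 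It therefore suffices to show that the fibres of $\calL \mapsto [M]$, intersected with $\genus(\calL)$, consist of finitely many isometry classes of tuples.

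The remaining point is to control, for a fixed isometry class $[M]$, how many isometry classes of \emph{tuples} in $\genus(\calL)$ map into it. Here I would argue that each $L_i$ is recovered from $M$ as the image of the $i$-th component projection $\pi_i \colon W \to V$ (composed with the natural inclusion), so that an isometry of the sum lattice that respects the block decomposition recovers an isometry of tuples. Concretely, the stabiliser in $\U(W,\Phi^{\oplus m})$ of the block decomposition $W = V \perp \cdots \perp V$ contains the diagonal copy of $\U(V,\Phi)$, and two tuples $\calL, \calL'$ in $\genus(\calL)$ with $M \cong M'$ via a block-preserving isometry are already isometric as tuples. Since there are finitely many ways to split a given lattice in $\genus(M)$ into $m$ admissible blocks compatible with the genus data at each place—the local completions $L_{i,\frakp}$ being prescribed by $\genus(\calL)$ up to local isometry, of which there are finitely many types—each isometry class $[M]$ accounts for only finitely many isometry classes of tuples.

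The main obstacle is the last step: passing from the finiteness of $\genus(M)$ back to finiteness of tuple classes requires showing that an abstract isometry $M \to M'$ can be replaced by one respecting the direct-sum decomposition. The cleanest route is to observe that the genus condition on $\calL$ fixes the local isomorphism type of the ordered tuple $(L_{1,\frakp},\ldots,L_{m,\frakp})$ at every place $\frakp$, and only finitely many of these local data are non-trivial (away from a finite set $S$ of places the lattices are unimodular and the tuple type is determined). Thus a global isometry of the sum lattices differs from a block-preserving one by an element of a compact-mod-finite correction at the finitely many bad places, yielding only finitely many isometry classes of tuples. This reduces the whole assertion to the single-lattice case together with a finite local bookkeeping, and the disjointness of the isometry classes is immediate from the definition of $[\calL]$.
\end{proof}
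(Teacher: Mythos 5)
Your reduction to Borel's finiteness theorem via the orthogonal sum $M = L_1 \perp \cdots \perp L_m$ in $(V^m,\Phi^{\oplus m})$ has a genuine gap, and it sits exactly where you flag "the main obstacle." The map $\calL \mapsto M$ forgets the essential datum: how the $L_i$ sit \emph{relative to one another inside the single space} $V$. Your claim that two tuples whose sums are isometric "via a block-preserving isometry are already isometric as tuples" is false: a block-preserving isometry of $L_1 \perp \cdots \perp L_m$ onto $L_1' \perp \cdots \perp L_m'$ is a tuple $(g_1,\ldots,g_m) \in \U(V,\Phi)^m$ with $L_i g_i = L_i'$, whereas tuple isometry requires a \emph{single} $g$ with $L_i g = L_i'$ for all $i$ simultaneously. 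This distinction is not a technicality; it is the whole content of the paper. Indeed, Corollary \ref{einklassig} states that a fine chain has class number one iff $L_0$ has class number one \emph{and} $\Aut(L_0)$ acts transitively on the relevant chains of subspaces of $\overline{L_0}$ --- the second condition can fail even when the first holds, i.e.\ componentwise isometric tuples in one genus need not be tuple-isometric. Consequently your fibre-finiteness step is unproven, and the closing paragraph ("compact-mod-finite correction at the finitely many bad places") is a gesture rather than an argument; making it precise would require exactly the finiteness you are trying to establish.

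The paper's proof avoids sums entirely and supplies the missing control by a sandwich argument: decompose $\genus(\calL)$ according to the isometry class of the first component (finitely many classes by the $m=1$ case), and normalize a representative of each tuple class so that its first component equals a fixed lattice $M_i$. Since all $L_j$ are lattices in the same $V$, there are $a,b \in \Z_K$ with $bL_1 \subseteq L_j \subseteq \frac{1}{a}L_1$, and the genus condition transports this to $bM_i \subseteq L_j' \subseteq \frac{1}{a}M_i$ for every normalized representative. As there are only finitely many lattices between $bM_i$ and $\frac{1}{a}M_i$, there are finitely many such tuples outright, hence finitely many isometry classes. If you want to salvage your approach, you would have to replace your fibre argument by essentially this bounding step --- at which point the orthogonal-sum construction and the appeal to Borel's theorem for $\genus(M)$ buy you nothing beyond the $m=1$ case that the paper already invokes.
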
 

\begin{proof}
	The case $m=1$ is the classical case.
	So assume that $m\geq 2$ and let 
	$\genus (L_1) := [M_1] \uplus \ldots \uplus [M_h] $, 
	with $M_i =L_1 g_i $ for suitable $g_i \in \U(V_A,\Phi )$.
	We decompose 
	$\genus (\calL ) = \calG _1 \uplus \ldots  \uplus \calG _h $ 
	where 
	$\calG _i := \{ (L'_1,\ldots ,L'_m) \in \genus (\calL ) \mid 
	L'_1 \cong M_i \} $. 
	It is clearly enough to show that  each $\calG _i$
	is the union of finitely many isometry classes. 
By construction, any isometry class in $\calG _i$ contains a representative 
of the form $(M_i,L'_2,\ldots,L'_m )$ for some lattices $L'_j$ in the
genus of $L_j$.
	As all the $L_j$ are lattices in the same vector space $V$, there are
 $a,b \in \Z_K$ such that 
 $$ b L_1 \subseteq L_j \subseteq \frac{1}{a} L_1 \mbox{ for all } 1\leq j \leq m.$$ 
As 
$(M_i,L'_2,\ldots,L'_m ) =  \calL g$ for some $g\in \U(V_A,\Phi )$ 
we also have 
$$ b M_i \subseteq L'_j \subseteq \frac{1}{a} M_i \mbox{ for all } 2\leq j \leq m .$$
So there are only finitely many possibilities for such lattices $L'_j$.
Hence 
the set of all $m$-tuples $(M_i,L'_2,\ldots,L'_m ) \in \genus (\calL )$ 
is finite and so is the class number. 
\end{proof}

%From now on, we assume that $\Phi $ is a (totally positive) definite hermitian form on $V$. This means that 
%$K$ is totally real and $\Phi(x,x)$ is totally positive for all nonzero $x \in V$.
%Then for any $\MO$-lattice $L$ in $V$, the group $\Aut (L) $ is finite.
%
%\begin{definition}
	%The {\bf mass} $\mass(\calL )$ of 
	%a finite tuple $\calL $ of lattices in $(V,\Phi )$ 
%is defined as 
%$$\mass (\calL ):= \sum _{i=1}^h |\Aut (\calL_i)| ^{-1} $$
%where $(\calL _i)_{i=1}^h $ is a system of representatives of the isometry classes in
%$\genus(\calL)$.
%\end{definition}

\begin{remark}
			If $\calL ' \subseteq \calL $ then 
			the class number of 
			$\calL' $ is at most the class number of $\calL$. 
\end{remark}

\subsection{Admissible lattice chains} 

\begin{definition}\label{admissible}
Let $\frakP$ be a maximal 2-sided ideal  of $\MO$ and $\frakp := K \cap \frakP$.
A lattice chain   
$$ \calL : = \{  L_0\supset L_{1} \supset \ldots \supset L_{m-1} \supset L_m \} $$
is called {\em admissible} for $\frakP$, if 
\begin{enumerate}
	\item $L_0 \subseteq L_0^{\#, \frakp} $,
	\item $ \frakP L_0 \subset L_m $, 
	\item $\frakP L_m^{\#, \frakp} \subseteq L_m$ if $\frakP = \sigma(\frakP) $. 
\end{enumerate}
%For an admissible chain $\calL$ we define 
%$$\begin{array}{ll}  
	%a_i:= \dim (L_{i-1}/L_{i})  & 1\leq i \leq m  \\ 
%a_0:= \dim (L_0^{\#,\frakp } /L_0) \end{array} $$
%and call $ (a_0,a_1,\ldots , a_{m}) $ the {\bf type} of ${\calL}$.
%\\
We call a $\frakP$-admissible chain {\em fine}, if 
$L_0$ is normalized for $\frakp$ in the sense of Definition \ref{normalised}, 
$L_i$ is a maximal sublattice of $L_{i-1}$ for all $i=1,\ldots , m$ 
and 
either 
\begin{itemize}
	\item[(a)] $\frakP = \sigma (\frakP )$ and 
 $L_m/\frakP L_m^{\#, \frakp}$ is an anisotropic space over $\MO/\frakP $ 
\item[(b)] $\frakP \neq \sigma(\frakP) $ and 
 $\frakP L_0 $ is a maximal sublattice of $L_m$.
	\end{itemize}
\end{definition}

\begin{remark}
	In the case that $\frakP \neq \sigma(\frakP )$ the 
	{\em length} $m$ of a fine admissible lattice chain 
	is just $m=r=\dim _E (V)-1$. 
Also if  $\frakP = \sigma(\frakP) $, then 
$m=r$, where $r$ is the rank of the $\frakp$-adic group defined in
Remark \ref{completiongroup}.
\end{remark}

Note that any admissible chain $\calL$ contains a unique maximal 
integral lattice which we will always denote by $L_0$.

\begin{remark}\label{masschain}
	Let $\calL =(L_0,\ldots , L_r)$ be a fine
	admissible lattice chain for $\frakP $. 
	\begin{itemize}
		\item[(a)] If $\frakP = \sigma(\frakP) $ 
	then $\overline{L_0} := 
	L_0/\frakP L_0^{\#,\frakp }$ is a hermitian space 
	over $\MO/\frakP $ and the spaces
	$V_j:=\frakP L_j ^{\#,\frakp }/ \frakP L_0^{\#,\frakp }$ 
	($j=1,\ldots , r$) define 
	a maximal chain of isotropic subspaces of this hermitian space.
	We call the chain  $(V_1,\ldots , V_{r-1})$ 
	{\em truncated}.
\item[(b)] If $\frakP \neq \sigma(\frakP) $ then 
	$\overline{L_0}:=L_0/\frakP L_0 $ is a vector space 
	over $\MO/\frakP $ and the spaces  $V_j:=L_j/\frakP L_0 $ 
	($j=r,\ldots , 1$) 
	form a maximal chain of subspaces.
	Here we call the chain $(V_{r-1},\ldots , V_1) $
	{\em truncated}.
	\end{itemize}
\end{remark}

For the different hermitian spaces $\overline{L_0}$, the number  of 
such chains of isotropic subspaces 
can be found by recursively applying the formulas in 
\cite[Exercises 8.1, 10.4, 11.3]{Taylor}.

\begin{corollary}\label{einklassig}
	The fine admissible lattice chain $\calL $ represents a one-class genus of lattice chains
	if and only if $L_0$ represents a one-class genus of lattices
	and $\Aut(L_0)$ is transitive on the 
	maximal chains of (isotropic) subspaces of $\overline{L_0}$.
\end{corollary}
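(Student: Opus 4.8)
The plan is to reduce the statement to a single counting formula that expresses the class number $h(\calL)$ in terms of $h(L_0)$ and of orbit numbers of automorphism groups on flags. First I would record the combinatorial dictionary underlying Remark~\ref{masschain}: sending a fine $\frakP$-admissible chain $\calL=(L_0,\ldots,L_r)$ to its unique maximal integral member $L_0$ together with the associated truncated chain of (isotropic) subspaces of $\overline{L_0}$ is a bijection between fine $\frakP$-admissible chains with fixed bottom $L_0$ and maximal chains of (isotropic) subspaces of $\overline{L_0}$. Indeed Remark~\ref{masschain} produces the flag from the chain, and the construction is reversible: the preimage of a subspace under the quotient map defining $\overline{L_0}$ determines $\frakP L_j^{\#,\frakp}$, hence $L_j$, because partial duality is involutive and multiplication by $\frakP$ is invertible on $L(\frakp)$. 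This dictionary is equivariant: for $g\in\U(V,\Phi)$ (or $g\in\U(V_A,\Phi)$) the chain $\calL g$ has bottom $L_0 g$ and corresponds to the image flag in $\overline{L_0 g}$.

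The geometric input I would isolate next is local flag transitivity. The stabilizer of $L_0$ in the completion $\U_\frakp$ of Remark~\ref{completiongroup} surjects, under reduction modulo $\frakP$, onto the isometry group of the residue space $\overline{L_0}$, and this classical group over $\MO/\frakP$ acts transitively on its maximal chains of (isotropic) subspaces (Witt's theorem over the residue field; the flags themselves are enumerated via \cite[Exercises 8.1, 10.4, 11.3]{Taylor}). Two consequences follow. First, $\calL$ and $\calL'$ lie in the same genus if and only if $L_0$ and $L_0'$ do: the forward implication holds because the bottom lattice is canonically determined inside a chain, so an adelic isometry matching the chains matches the bottoms; conversely, if some adelic $g$ carries $L_0$ to $L_0'$, then $\calL g$ and $\calL'$ share the bottom $L_0'$, and by local flag transitivity the adelic stabilizer of $L_0'$ contains an element carrying the flag of $\calL g$ to that of $\calL'$. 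Second, for every $M\in\genus(L_0)$ and every maximal (isotropic) flag $\mathcal F$ of $\overline M$, the chain corresponding to $(M,\mathcal F)$ lies in $\genus(\calL)$.

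Now I would assemble the counting formula. Partition $\genus(\calL)$ according to the isometry class of the bottom lattice; since this class is a $\U(V,\Phi)$-invariant of the chain and, by the previous step, ranges exactly over the $h(L_0)$ isometry classes of $\genus(L_0)$, there are $h(L_0)$ parts. Fix a representative $M$ of one class. Every chain in the corresponding part is isometric to one with bottom exactly $M$, and two such chains are isometric precisely when some $u\in\U(V,\Phi)$ carries one to the other; such a $u$ must fix the canonical bottom $M$, hence $u\in\Aut(M)$, and conversely $\Aut(M)$ permutes these chains by permuting their flags. Combined with the fact that all flags of $\overline M$ occur, this identifies the isometry classes of chains in this part with the $\Aut(M)$-orbits on the maximal (isotropic) flags of $\overline M$. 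Summing over classes gives
\[ h(\calL) \;=\; \sum_{[M]\subseteq \genus(L_0)} \#\bigl(\Aut(M)\backslash \{\text{maximal (isotropic) flags of }\overline M\}\bigr). \]

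Each summand is at least $1$ and the number of summands is $h(L_0)$, so the left-hand side equals $1$ if and only if there is a single summand, namely $h(L_0)=1$, and that summand equals $1$, namely $\Aut(L_0)$ acts transitively on the maximal chains of (isotropic) subspaces of $\overline{L_0}$; this is exactly the asserted equivalence. I expect the main obstacle to be the local flag transitivity used in the second paragraph: one must verify that every isometry of the residue space $\overline{L_0}$ lifts to an automorphism of the local lattice $L_0$, and that the relevant classical group over $\MO/\frakP$ is transitive on maximal (isotropic) flags, uniformly across the cases of Remark~\ref{completiongroup}. Once this surjectivity is secured, the bijection with flags and the orbit-counting are routine bookkeeping.
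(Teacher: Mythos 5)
Your proof is correct and is essentially the argument the paper intends: the paper states Corollary \ref{einklassig} without proof, as an immediate consequence of the dictionary of Remark \ref{masschain} between fine $\frakP$-admissible chains with fixed bottom $L_0$ and maximal (isotropic) flags of $\overline{L_0}$, combined with transitivity of the local stabilizer of $(L_0)_{\frakp}$ on such flags, and your orbit-counting formula $h(\calL)=\sum_{[M]}\#\bigl(\Aut(M)\backslash\{\text{maximal flags of }\overline{M}\}\bigr)$ simply makes this bookkeeping explicit. The one refinement worth noting is that you do not need full surjectivity of the reduction map onto the isometry group of $\overline{L_0}$ (which is delicate in residue characteristic $2$); the weaker and standard Bruhat--Tits fact that the vertex stabilizer acts transitively on the chambers of the vertex link --- i.e.\ on the maximal (isotropic) flags --- is exactly what the lattice-chain model of \cite{AN} provides and is all your argument uses.
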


\section{Chamber transitive actions on affine buildings.} 

Kantor, Liebler and Tits \cite{KLT} 
classified discrete groups acting chamber transitively and type preservingly 
on the affine building of a simple adjoint algebraic group 
of relative rank $\geq 2$ 
over a locally compact local field. 
Such groups are very rare and hence this situation is
an interesting phenomenon, further studied in 
\cite{Ka1}, \cite{Ka2}, \cite{KMW1},  \cite{MW1}, \cite{KMW90}, 
and \cite{M} (and many more papers by these authors)
where explicit constructions of the groups are given. 
One major disadvantage of the existing literature is that the 
proof in \cite{KLT} is very sketchy, essentially the authors
limit the possibilities that need to be checked to a finite number.

From the classification of the one-class genera of 
admissible fine lattice chains in Section \ref{oneclasschains}, we obtain 
a number theoretic construction of the groups 
in \cite{KLT} over fields of characteristic 0.
It turns out that we find essentially all these 
groups and that our construction allows to find  one more 
case: 
The  building of $\U_5(\Q_3(\sqrt{-3}) )$ 
of type $C-BC_2$, see Proposition \ref{hf} (1),
which, to our best knowledge,  has not appeared in the literature before.

\subsection{S-arithmetic groups} 

We assume that 
$(V,\Phi )$ is a totally positive definite hermitian space, i.e. 
$K$ is totally real and 
$\Phi (x,x) \in K $ is totally positive for all non-zero $x\in V$.

Let $S =\{ \frakp_1,\ldots , \frakp_m \} $ be a finite set
of prime ideals of $\Z_K$.
For a prime ideal $\frakp $ we denote by $\nu _{\frakp}$ the 
$\frakp $-adic valuation of $K$. 
Then the ring of $S$-integers in $K$ is 
$$ \Z_S := 
\{ a\in K \mid \nu _{\frakq} (a) \geq 0 \text{ for all prime ideals } 
\frakq \notin S \} .$$

Let $L$ be some $\MO $-lattice in $(V,\Phi )$ and put
$L_S:= L\otimes_{\Z_K} \Z_S$. 
Then the group 
$$\Aut (L_S ) := \{ g\in \U(V,\Phi ) \mid L_S g = L_S \} $$ 
is an $S$-arithmetic subgroup of $\U(V,\Phi )$. 

\begin{remark} 
For any prime ideal $\frakp$, the group $\U(V,\Phi )$ 
(being a subgroup of $\U_{\frakp }$) 
 acts on the Bruhat-Tits building $\calB$ of the group $\U_{\frakp }$ 
defined in Remark \ref{completiongroup}. 
Assume that the rank of $\U_{\frakp }$ is at least 1. 
The action of the subgroup $\Aut (L_S) $ is discrete and cocompact 
on $\calB$, if and only if $\frakp \in S$ and $(V_{\frakq} ,\Phi )$ is 
anisotropic for all $\frakp\neq \frakq \in S $.
\\
Note that anisotropic spaces only exist up to dimension 4. 
So if we assume that $\U_{\frakp} ^+ $ is simple modulo scalars
and has rank $\geq 2$, then 
$\Aut (L_S) $ is discrete and cocompact 
on $\calB$, if and only if $S=\{ \frakp \}$ 
\end{remark}

\subsection{The action on the building of $\U_{\frakp}$}

In the following we fix a prime ideal $\frakp $ and assume 
that $S=\{ \frakp \} $. 

A lattice class model for the affine building 
$\calB$ has been described in 
\cite{AN}. Note that \cite{AN} imposes the assumption that the 
residue characteristic of $K_{\frakp }$ is $p\neq 2$. 
This is only necessary to obtain a proof of the building axioms 
that is independent from Bruhat-Tits theory. 
For $p=2$, the dissertation \cite{FrischDiss} contains 
the analogous description of the Bruhat-Tits building 
for orthogonal groups. 
For all residue characteristics, 
the chambers in $\calB $ correspond 
to certain fine lattice chains in the natural $\U_{\frakp} $-module $W_{\frakp} $.

Let $L$ be a fixed $\frakp $-normalized lattice in $V$ 
and put $V_{\frakp} := V\otimes _K K_{\frakp} $.

In the case that $E\otimes _K K_{\frakp }$ is a skewfield, 
we decompose the completion 
$$L_{\frakp } = \HH(\MO _\frakp)^r \perp M_0 
= \bigperp _{i=1}^r \langle e_i,f_i \rangle _{\MO _{\frakp}} \perp M_0$$
as in Definition \ref{normalised}.
Then $ V_\frakp =  V_0\perp
\langle e_1,\ldots , e_r,f_1,\ldots , f_r \rangle _{K_{\frakp}} $
where $V_0 = K_{\frakp } M_0$ is anisotropic.
Then the standard chamber corresponding to $L$ and the 
choice of this hyperbolic basis is represented by 
the admissible fine lattice chain 
$$\calL = (L=L_0,L_1,\ldots , L_{r} ) $$
where $L_j \in L(\frakp ) $ is the unique lattice in $V$ such that 
$$(L_j) _{\frakp} = \bigperp _{i=1}^{j} \langle \pi e_i,f_i \rangle _{\MO _{\frakp}} \perp 
\bigperp _{i=j+1}^{r} \langle  e_i,f_i \rangle _{\MO _{\frakp}}
\perp M_0.$$

Now assume that  $E\otimes _K K_{\frakp } \cong K_{\frakp}^{2\times 2}$
and $W_{\frakp } = eV_{\frakp } $ for some primitive idempotent $e$ such that 
$\sigma(e) = 1-e$ as in Remark \ref{completiongroup}. 
Then $W_{\frakp }$ 
carries a symplectic form $\Psi $  and the
 lattice 
 $L_{\frakp }e $ has a symplectic basis $(e_1,f_1,\ldots , e_r,f_r )$, i.e. 
$$L_{\frakp }e = 
 \bigperp _{i=1}^r \langle e_i,f_i \rangle _{\Z _{K_\frakp}} $$
with $\Psi (e_i,f_i ) = 1$. 
The standard chamber corresponding to $L$ and the 
choice of this symplectic basis is represented by 
the admissible fine lattice chain 
$$\calL = (L=L_0,L_1,\ldots , L_{r} ) $$
where $L_j \in L(\frakp ) $ is the unique lattice in $V$ such that 
$$(L_j) _{\frakp} = \bigperp _{i=1}^{j} \langle \pi e_i,f_i \rangle _{\MO _{\frakp}} \perp 
\bigperp _{i=j+1}^{r} \langle  e_i,f_i \rangle _{\MO _{\frakp}}
.$$

In the last and most tricky case 
$E\otimes _K K_{\frakp } \cong K_{\frakp} \oplus K_{\frakp}$. Then $W_{\frakp} = V_{\frakp } e_{\frakP} $ for any of 
the two maximal ideals $\frakP $ of $\MO $ that contain $\frakp$,
$\U_{\frakp } \supseteq \SL(W_{\frakp}) $ and  
$M_{\frakp} := L _{\frakp}  e_{\frakP } $ is a lattice in $W_{\frakp }$. 
To define the standard chamber fix some 
$\Z_{K_\frakp} $-basis $(e_1,\ldots,e_r)$ of $M_{\frakp}$. 
Then the fine admissible lattice chain 
$$\calL = (L=L_0,L_1,\ldots , L_{r} ) $$
where $L_j$ is the unique lattice in $V$ such that 
\begin{itemize} 
	\item 
$(L_j)_{\frakQ} = L_{\frakQ}$  for all prime ideals $ \frakQ \neq \frakP $ of $\MO $
\item
	$(L_j) _{\frakP} = \bigoplus _{i=1}^{j} \langle \pi e_i \rangle _{\MO _{\frakP} } \oplus 
\bigoplus _{i=j+1}^{r} \langle  e_i \rangle _{\MO _{\frakP}} .$
\end{itemize}

	%In this situation let  
	%$G:= \Aut(L_S) $
	%be the $S$-arithmetic group defined by the lattice $L$
	%(where $S=\{ \frakp \} $ as before). 

	\begin{lemma}\label{uniqueunimod} 
		Assume that $\frakP \neq \sigma(\frakP) $, so $E\otimes _K K_{\frakp } \cong K_{\frakp} \oplus K_{\frakp}$ and keep the notation from above. 
	Let $M$ be some $\MO $-lattice in $V$.
	Then
	$
	\{ X \in M(\frakp ) 
	\mid e _{\frakP} X_{\frakp}  =e_{\frakP}M_{\frakp}  \} $
contains  a unique lattice $Y$ 
	with $Y=Y^{\# ,\frakp  }$. 
	%In particular, $\Stab _G(e_{\frakP}M_{\frakp} ) = \Aut (Y) $
	%is a finite group.
\end{lemma}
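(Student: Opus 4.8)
The plan is to exploit the splitting $E_\frakp \cong K_\frakp \oplus K_\frakp$ to turn the self-duality condition into a condition on a single lattice in the space dual to $W_\frakp := e_\frakP V_\frakp$. Write $e_{\sigma(\frakP)} = 1 - e_\frakP$ for the complementary idempotent and $W'_\frakp := e_{\sigma(\frakP)} V_\frakp$, so that $V_\frakp = W_\frakp \oplus W'_\frakp$. First I would record two structural facts about $\Phi$ in this case: each of $W_\frakp$ and $W'_\frakp$ is totally isotropic (this follows directly from the rule $\Phi(\alpha x,\beta y)=\alpha\Phi(x,y)\sigma(\beta)$ upon substituting $\alpha,\beta \in \{e_\frakP, e_{\sigma(\frakP)}\}$), and $\Phi$ induces a perfect $K_\frakp$-bilinear pairing $\langle \cdot,\cdot\rangle \colon W_\frakp \times W'_\frakp \to e_\frakP E_\frakp \cong K_\frakp$. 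Given a lattice $N$ with $N_\frakp = A \oplus B$, where $A := e_\frakP N_\frakp \subseteq W_\frakp$ and $B := e_{\sigma(\frakP)} N_\frakp \subseteq W'_\frakp$, a short computation with this pairing then identifies the dual as $N_\frakp^\# = B^\vee \oplus A^\vee$, where $B^\vee \subseteq W_\frakp$ and $A^\vee \subseteq W'_\frakp$ denote the duals with respect to $\langle \cdot,\cdot\rangle$; here the key inputs are that $\Phi$ pairs $e_\frakP x$ against $e_{\sigma(\frakP)} m$ (and vice versa by hermitian symmetry) while killing the same-component products. In particular, $N_\frakp = N_\frakp^\#$ holds if and only if $B = A^\vee$ (equivalently $A = B^\vee$).

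With this in hand the reduction is straightforward. By the local-global principle (Remark \ref{localglobal}), any lattice $X \in M(\frakp)$ is uniquely determined by its $\frakp$-completion $X_\frakp$, and for $X$ in the set under consideration we have $X_\frakq = M_\frakq$ for all $\frakq \neq \frakp$ together with the fixed component $e_\frakP X_\frakp = A := e_\frakP M_\frakp$. Since $E_\frakp$ is commutative with $\MO_\frakp \cong \Z_{K_\frakp} \oplus \Z_{K_\frakp}$, such $X$ correspond bijectively to the free choice of a full $\Z_{K_\frakp}$-lattice $B' := e_{\sigma(\frakP)} X_\frakp$ in $W'_\frakp$, via $X_\frakp = A \oplus B'$. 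As $X = X^{\#,\frakp}$ is equivalent to $X_\frakp = X_\frakp^\#$, the description of the dual above shows that $X$ is self-dual at $\frakp$ exactly when $B' = A^\vee$. This pins down $B'$ uniquely, and conversely the choice $B' = A^\vee$ does produce a self-dual completion, because perfectness of the pairing makes the companion identity $A = (B')^\vee = (A^\vee)^\vee$ automatic. Hence the lattice $Y$ with $Y_\frakp = A \oplus A^\vee$ exists and is the unique element of the set satisfying $Y = Y^{\#,\frakp}$.

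The only real obstacle is the bookkeeping in the first paragraph: computing the two idempotent-components of $N_\frakp^\#$ correctly and checking that self-duality of the pair $(A,B')$ collapses to the single equation $B' = A^\vee$. This hinges on the perfectness of the induced pairing, i.e.\ that taking the $\langle\cdot,\cdot\rangle$-double dual is the identity on lattices, which I would deduce from the nondegeneracy of $\Phi$. Everything else is the local-global principle together with the freedom to prescribe the $e_{\sigma(\frakP)}$-part of the completion independently of the fixed $e_\frakP$-part.
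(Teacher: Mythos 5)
Your proof is correct and takes essentially the same approach as the paper: by the local--global principle one only needs to construct the completion $Y_\frakp = e_\frakP M_\frakp \oplus (1-e_\frakP)X_\frakp$, and unimodularity holds precisely when the $(1-e_\frakP)$-component is the dual of $e_\frakP M_\frakp$ under the pairing induced by $\Phi$. The paper's proof is a one-line version of this, leaving implicit exactly the bookkeeping you spell out (isotropy of the two idempotent components, perfectness of the cross pairing, and the double-dual identity).
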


\begin{proof}
	As $Y\in M(\frakp )$ it is enough to define 
	$Y_{\frakp } = e_{\frakP}M_{\frakp}  \oplus (1-e_{\frakP}) X_{\frakp} $. This $\MO _{\frakp}$-lattice is unimodular if and only if 
	\[ (1-e_{\frakP}) X_{\frakp } = \{ x\in (1-e_{\frakP}) V 
	\mid \Phi ( e_{\frakP}M_{\frakp}, x) \subseteq \MO _{\frakp} \} .\]
\end{proof} 

Thus for $\frakP \neq \sigma(\frakP )$ the stabilizer in the $S$-arithmetic 
group $\Aut(L_S)$ of a vertex in the
building $\calB $ is the automorphism group of a $\frakp $-unimodular lattice. 
Also if $\frakP = \sigma(\frakP )$, 
	any vertex in the building $\calB $ corresponds to
	a unique homothety class of lattices $[M_{\frakp}]
	=\{ a M_{\frakp } \mid a\in K_{\frakp}^* \} $. 
	So by Remark \ref{localglobal} there is a unique 
	lattice  $X\in L(\frakp)$ 
	with $X_{\frakp} = M_{\frakp} $. 
	Hence  the stabilizers of the vertices in $\calB $ are 
	exactly the automorphism groups of the respective lattices in $V$. 
	In particular these are finite groups.

\begin{remark}
	As $\U_{\frakp} $ acts transitively on the chambers 
	of $\calB $, any other chamber (i.e. $r$-dimensional simplex)
	in $\calB$ corresponds to 
	some lattice chain in the genus of $\calL = (L_0,\ldots , L_r) $.
	The $(r-1)$-dimensional simplices are the $\U_{\frakp }$-orbits of 
the subchains 
$\calL _j := ( L_i \mid i\neq j ) $ of $\calL $ for $j=0,\ldots , r$. 
We call these simplices {\em panels} and  $j$ the cotype of the 
panel $\calL _j$. 
\end{remark}

\begin{theorem}\label{main}
	Let $\calL = (L_0,\ldots , L_r)$ be a fine admissible 
	lattice chain for $\frakP $ 
	of class number one. 
	Put $L:=L_0$ and $S:=\{ \frakp \}$. 
	Then $\Aut (L_S )$ acts chamber
	transitively on the (weak) Bruhat-Tits building $\calB$ of the
	completion $\U_{\frakp }$.
\end{theorem}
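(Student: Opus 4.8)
The plan is to reduce chamber transitivity to the class number one hypothesis by means of the local-global principle of Remark \ref{localglobal}. Throughout I identify an element $h \in \U(V,\Phi)$ with its image in $\U_{\frakp}$ under completion at $\frakp$, so that the action of $\Aut(L_S) \subseteq \U(V,\Phi)$ on $\calB$ is the one induced from the action of $\U_{\frakp}$. Write $C_0$ for the standard chamber, represented by the completion $\calL_{\frakp}$ of $\calL$ at $\frakp$.

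First I fix an arbitrary chamber $C$ of $\calB$. Since $\U_{\frakp}$ acts transitively on the chambers of $\calB$, there is a local element $g_{\frakp} \in \U_{\frakp}$ with $C_0 g_{\frakp} = C$; concretely $C$ is represented by the lattice chain $\calL_{\frakp} g_{\frakp}$ in $V_{\frakp}$. I then lift $g_{\frakp}$ to an adelic element $g = (g_v)_v \in \U(V_A,\Phi)$ by setting $g_v = 1$ for every place $v \neq \frakp$. By the definition of the genus of a tuple the chain $\calL g$ lies in $\genus(\calL)$; its completion at $\frakp$ is $\calL_{\frakp} g_{\frakp}$, so it represents $C$, while at every finite place $\frakq \neq \frakp$ its completion equals that of $\calL$.

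Now I invoke the hypothesis that $\calL$ has class number one: as $\calL g$ and $\calL$ lie in the same genus, they are isometric, so there is a global $h \in \U(V,\Phi)$ with $\calL h = \calL g$, that is $L_i h = L_i g$ for all $i$ simultaneously. Two points remain. First, $h \in \Aut(L_S)$: the equality $L_S h = L_S$ of $\Z_S$-lattices may be checked at the maximal ideals of $\Z_S$, i.e. at the finite places $\frakq \neq \frakp$ of $K$, where $(L_0 h)_{\frakq} = (L_0 g)_{\frakq} = (L_0)_{\frakq}$ since $g_{\frakq} = 1$; hence the condition holds. Second, $h$ carries $C_0$ to $C$: completing $\calL h = \calL g$ at $\frakp$ gives $\calL_{\frakp} h_{\frakp} = \calL_{\frakp} g_{\frakp}$, whose left-hand side represents $C_0 h$ and whose right-hand side represents $C$. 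Thus $C_0 h = C$ with $h \in \Aut(L_S)$, and since $C$ was arbitrary the action is chamber transitive.

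The main obstacle is not a single computation but the correct bookkeeping of the local-global passage: one must realize the local modification at $\frakp$ by an adelic element that is trivial elsewhere, so that $\calL g$ stays in the genus and agrees with $\calL$ away from $\frakp$, and one must observe that the global isometry furnished by the class number one hypothesis is automatically $\frakp$-integral at every other place, which is exactly what places it in $\Aut(L_S)$ rather than merely in $\U(V,\Phi)$. The fact that a \emph{chain} isometry $h$ satisfies $L_i h = L_i g$ for all $i$ at once is what guarantees that $h$ moves the entire chamber $C_0$ and not just one of its vertices.
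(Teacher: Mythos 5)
Your proof is correct, but it follows a genuinely different route from the paper's. You apply the class\hyphenation{number}-number-one hypothesis to the \emph{chain} $\calL$ directly: you lift the local element $g_{\frakp}$ to an adelic element that is trivial at every other place, note that $\calL g \in \genus(\calL)$, and let the hypothesis produce a single global isometry $h$ with $\calL h = \calL g$; the triviality of $g$ away from $\frakp$ is then precisely what forces $h \in \Aut(L_S)$ and what makes $h$ carry the standard chamber to $C$. The paper instead invokes Corollary \ref{einklassig} to split the hypothesis into two pieces --- class number one of the single lattice $L_0$, and transitivity of the finite group $\Aut(L_0)$ on the maximal chains of (isotropic) subspaces of $\overline{L_0}$ --- and argues in two stages: first a global element (from $h(L_0)=1$) moves the image chamber back into the star of the vertex $v$ corresponding to $L$, then an element of $\Aut(L)$ (from the flag transitivity) moves it onto the standard chamber inside that star. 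Your argument is shorter and needs only the dictionary between chambers of $\calB$ and completions of chains in $\genus(\calL)$, plus the definition of the class number of a tuple; the paper's two-step version buys something else, namely that it runs on exactly the criteria that are checked computationally in the classification (class number of $L_0$ and transitivity of $\Aut(L_0)$ on flags of $\overline{L_0}$), so the theorem is proved in the form in which it is later applied. Both arguments share the opening step (chamber transitivity of $\U_{\frakp}$), and both rely implicitly on the fact that an element of $\U_{\frakp}$ fixing the chain $\calL_{\frakp}$ fixes the corresponding chamber, which is part of the lattice-chain model of $\calB$ set up before the theorem.
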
 

\begin{proof}
	We use the characterization of Lemma \ref{einklassig}. 
	Let $\calC $ be the chamber of $\calB $ that corresponds to $\calL $ 
	by the construction above and let $\calD $ be some other chamber
	in $\calB $. Then there is some element $g\in \U_{\frakp }$ with
	$\calC g = \calD $. As the 
	genus of $L$ consists only of one class, there 
	is some $h \in \Aut(L_S) $ such that 
	$g h \in \U_{\frakp }$ stabilizes the vertex $v$ that corresponds to 
	$L$. So $gh \in \Stab _{\U_{\frakp}} (L_{\frakp }) $ 
	and $\calD h $ is some chamber in $\calB $ 
	containing the vertex $v$. Now $\Aut (L) $ acts transitively 
	on the set of all fine admissible lattice chains for $\frakP $ starting in 
	$L$, so there is some $h'\in \Aut(L) $ such that 
	$\calD h h' = \calC $. 
	Thus the element $h h' \in \Aut(L_S) $ maps $\calD $ to $\calC $.
\end{proof}

\subsection{The oriflamme construction} \label{Secoriflamme} 

The buildings $\calB$ described above are in general not thick buildings, 
i.e. there are panels that are only contained in exactly two chambers.
Such panels are called thin. 
To obtain a thick building $\calB ^+$ (with a type preserving action by
the group $\U_{\frakp}^+$ defined in Remark \ref{completiongroup}) 
we need to apply a generalization of the 
oriflamme construction as described in \cite[Section 8]{AN}. 
In particular \cite[Section 8.1]{AN} gives the precise situations 
which panels are thin for the case that $p\neq 2$. 
Also for $p=2$ only the panels of cotype $0$ and $r$ can be thin. 
We refrain from describing the situations for $p=2$ in general, 
but refer to the individual examples below. 

\begin{remark}\label{oriflamme}
	Assume that  $\frakP = \sigma(\frakP )$.  
	\begin{enumerate}
		\item[(a)]
	Assume that there are only two lattices 
	$L_0$ and $L_0'$ in the genus of $L_0$ such
	that 
	$$L_1 \subseteq L_0,L_0' \subseteq L_1^{\#,\frakp }.$$
	Then $\calL $  
	and $\calL' := (L_0', L_1,\ldots , L_{r}) $ are
	the only chambers in $\calB $ that contain the 
	panel $\calL_0 = (L_1,\ldots , L_{r}) $ and hence
	this panel is thin. 
	Then we replace the vertex represented by 
	$L_1$ by the one represented by $L_0'$. 
		\item[(b)]
	Assume that there are only two lattices 
	$L_r$ and $L_r'$ in the genus of $L_r$ such
	that 
	$$\frakP L_{r-1}^{\#,\frakp } \subseteq L_r,L_r' \subseteq L_{r-1}.$$
	Then $\calL $  
	and $\calL' := (L_0, L_1,\ldots , L'_{r}) $ are
	the only chambers in $\calB $ that contain the 
	panel $\calL_r = (L_0,\ldots , L_{r-1}) $ and hence
	this panel is thin. 
	Then we replace the vertex represented by 
	$L_{r-1}$ by the one represented by $L_r'$. 
\item[(c)]
	 After this construction the standard chamber ${\calL}^+ $
	in the 
	thick building $\calB^+$ is either represented by 
	$\calL $, $ (L_0,L_0',L_2,\ldots , L_r )$, 
	$(L_0,L_1,\ldots , L_{r-2},L_r,L_r' )$, or 
	$(L_0,L_0',L_2,\ldots , L_{r-2},L_r,L_r' )$.
	Note that by construction the chain ${\calL }$ can 
	be recovered from ${\calL} ^+ $, so
 the stabilizer of  
 $\calL $ is equal to the stabilizer of 
 all lattices in  $\calL^+$. 
 Moreover every element in $\U_{\frakp }$
 mapping the chain ${\calL }$ to some other chain ${\calL }'$ 
 maps the chamber ${\calL }^+$ to the chamber $(\calL ')^+$ .
 \end{enumerate}
 For more details we refer to \cite[Section 8.3]{AN}. 
\end{remark}

In particular by part (c) of the previous remark we find the 
important corollary.

\begin{corollary}
	In the situation of Theorem \ref{main} the 
	group $\Aut(L_S)$ also acts chamber transitively (not necessarily type preservingly)
	on the thick building $\calB^+$.
\end{corollary}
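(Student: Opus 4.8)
The plan is to transport the chamber transitivity established in Theorem \ref{main} from the weak building $\calB$ to the thick building $\calB^+$ by means of the oriflamme correspondence $\calL \mapsto \calL^+$. The entire argument rests on the equivariance recorded in Remark \ref{oriflamme}(c): if $g\in \U_{\frakp}$ maps a fine admissible chain $\calL$ to another fine admissible chain $\calL'$, then $g$ maps the chamber $\calL^+$ of $\calB^+$ to $(\calL')^+$. Since $\Aut(L_S)\subseteq \U(V,\Phi)\subseteq \U_{\frakp}$, this equivariance governs the action of the $S$-arithmetic group as well. I would also record at the outset that the full completion $\U_{\frakp}$ acts chamber transitively on $\calB^+$: it contains the type preserving group $\U^+_{\frakp}$, which already acts transitively on the chambers of $\calB^+$ by the standard BN-pair structure of the Bruhat--Tits building.

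Concretely, let $\calC^+ := \calL^+$ be the standard chamber of $\calB^+$ attached to the standard chamber $\calC = \calL$ of $\calB$, and let $\calD$ be an arbitrary chamber of $\calB^+$. First I would use the chamber transitivity of $\U_{\frakp}$ on $\calB^+$ to choose $g\in\U_{\frakp}$ with $\calL^+ g = \calD$. Then $\calL g$ is a chamber of $\calB$, so Theorem \ref{main} (applicable because $\calL$ is of class number one) furnishes some $\gamma\in\Aut(L_S)$ with $\calL\gamma = \calL g$. Applying the equivariance of Remark \ref{oriflamme}(c) to both $\gamma$ and $g$, I obtain $\calL^+\gamma = (\calL\gamma)^+ = (\calL g)^+ = \calL^+ g = \calD$. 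Hence $\gamma$ carries the standard chamber $\calC^+$ to $\calD$, and since $\calD$ was arbitrary, $\Aut(L_S)$ acts chamber transitively on $\calB^+$. Note that no type preservation is claimed, since $\Aut(L_S)$ need not lie in $\U^+_{\frakp}$.

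The argument is essentially formal once Theorem \ref{main} and Remark \ref{oriflamme}(c) are in hand, and there is no substantial computation. The point that requires care, and which I regard as the main obstacle, is the passage between the two buildings. One must be sure that $\calL^+$ is genuinely a chamber of the thick building in each of the four shapes listed in Remark \ref{oriflamme}(c), that the chain $\calL$ is recoverable from $\calL^+$ so that $\calL\mapsto\calL^+$ is a well-defined $\U_{\frakp}$-equivariant map of chambers rather than merely a map of sets, and that the full group $\U_{\frakp}$---rather than only its type preserving subgroup $\U^+_{\frakp}$---acts transitively on the chambers of $\calB^+$, which is exactly what permits the resulting action of $\Aut(L_S)$ to be chamber transitive without being type preserving. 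All three facts are supplied by the oriflamme construction of Remark \ref{oriflamme} together with the BN-pair structure of $\calB^+$, so no input beyond the cited results is needed.
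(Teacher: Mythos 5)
Your proof is correct and takes essentially the same route as the paper: the paper deduces the corollary directly from Remark \ref{oriflamme}(c), i.e., from the $\U_{\frakp}$-equivariance of the correspondence $\calL \mapsto \calL^+$, which is exactly the mechanism you use to transport the chamber transitivity of Theorem \ref{main} from $\calB$ to $\calB^+$. The only difference is that you spell out, via the chamber transitivity of $\U_{\frakp}$ on $\calB^+$, why every chamber of $\calB^+$ arises as some $(\calL g)^+$, a point the paper leaves implicit in the oriflamme construction of \cite[Section 8.3]{AN}.
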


\begin{remark}\label{splitoriflamme}
	Also in the situation where $\frakP \neq \sigma(\frakP ) $,
 i.e. $E_{\frakp } = K_{\frakp} \oplus K_{\frakp }$,
	the stabilizers of the points in the building are not 
	the stabilizers of the lattices in the lattice chain.
	By Lemma \ref{uniqueunimod} 
	the lattices $L_i$ ($i=1,\ldots ,r$) 
	need to be replaced by the uniquely defined
	lattices $Y_i\in L_i(\frakp )$, such that $(Y_i)_\frakp$ is unimodular 
	(as in Lemma \ref{uniqueunimod}) and  $Y_i\cap L_0 = L_i$. 
	We refer to this construction as a variant of the oriflamme 
	construction in the examples below.
\end{remark}

\begin{theorem}\label{classtwo}
Let $\calL= (L_0 , \dots, L_r)$ be a fine $\frakP$-admissible lattice chain for some maximal two sided 
ideal $\frakP $ of $\MO$ such that $\sigma(\frakP) = \frakP$.
Suppose that the oriflamme construction replaces $\calL$ by some sequence of lattices $\calL^+$ which is one of
$$ \calL, (L_0,L_0',L_2,\ldots , L_r ), 
   (L_0,L_1,\ldots , L_{r-2},L_r,L_r' ) \mbox{ or }
   (L_0,L_0',L_2,\ldots , L_{r-2},L_r,L_r' ).  $$
   Then $L_0$ and $L_0'$ as well as $L_r$ and $L'_r$ are in the same genus but not in the 
   same proper special genus. 
   Put $L:=L_0$ and $S:= \{ \frakp \} $. 
   Then $\Aut ^+(L_S):=\Aut(L_S) \cap \SU (V,\Phi )$ acts type preservingly on the thick building $\calB ^+$.
   This action is chamber transitive if and only if 
    $h^+(L) = 1$ and $\Aut ^+(L) $
    is transitive on the maximal chains (in the first two cases)
    respectively truncated maximal chains  (in the last two cases)
    of isotropic subspaces of $\overline{L}$
defined  in Remark \ref{masschain}.
   %\begin{itemize}
	   %\item[(a)] $\calL ^+ = \calL $, $h^+(L) = 1$ and $\Aut ^+(L) $
%is transitive on the maximal chains of (isotropic) subspaces of $\overline{L}$
			           %defined  in Remark \ref{masschain}.
	   %\item[(b)] $\calL ^+ =  (L_0,L_1,\ldots , L_{r-2},L_r,L_r' )$,
	   %$h^+(L) = 1$ and 
	   %$\Aut ^+(L)$ acts transitive
   %on the 
   %truncated maximal chains of (isotropic) subspaces of $\overline{L}$
         %defined  in Remark \ref{masschain}.
 %\item[(c)] $\calL^+ = (L_0,L_0',L_2,\ldots , L_r ) $, $h^+(L) = 2$ and $\Aut^+(L)$ 
%is transitive on the maximal chains of (isotropic) subspaces of $\overline{L}$
		           %defined  in Remark \ref{masschain}.
   %\item[(d)] $\calL^+ = (L_0,L_0',L_2,\ldots , L_{r-2},L_r,L_r' ) $, 
	   %$h^+(L) = 2$ and $\Aut ^+(L)$ acts transitive on the  
         %truncated maximal chains of (isotropic) subspaces of $\overline{L}$
          %defined  in Remark \ref{masschain}.
	  %\end{itemize}
\end{theorem}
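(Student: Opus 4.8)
The plan is to run the proof of Theorem \ref{main} inside the thick building $\calB^+$, systematically replacing $\U_\frakp$ by its type preserving subgroup $\U^+_\frakp$, the genus by the proper special genus, and $h$ by $h^+$. There are three things to check: that the oriflamme partners $L_0,L_0'$ (and $L_r,L_r'$) share a genus but not a proper special genus; that $\Aut^+(L_S)$ really preserves types; and the chamber transitivity criterion. I expect the first point, which amounts to matching the local ``type'' of a vertex of $\calB^+$ with a global proper special genus, to be the main obstacle, since it requires the full spinor norm / determinant bookkeeping of spinor genus theory.

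First I would treat $L_0,L_0'$ (the pair $L_r,L_r'$ being symmetric). That they lie in one genus is built into Remark \ref{oriflamme}(a). They moreover agree at every prime $\frakq\neq\frakp$: since $(L_1^{\#,\frakp})_\frakq=(L_1)_\frakq$, the inclusions $L_1\subseteq L_0,L_0'\subseteq L_1^{\#,\frakp}$ force $(L_0)_\frakq=(L_0')_\frakq=(L_1)_\frakq$. Thus any $g\in\U(V_A,\Phi)$ with $L_0g=L_0'$ is trivial away from $\frakp$, and at $\frakp$ it realizes the passage between the two vertices of the thin panel $\calL_0$, i.e. the end swap of the oriflamme diagram described in \cite[Section 8]{AN}. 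By the local--global description of the proper special genus---the proper special genera in $\genus(L_0)$ being controlled by the idele group modulo $K^*$, squares, and the local spinor norms of the lattice stabilizers, as in classical spinor genus theory \cite{OMeara}---this local diagram automorphism contributes a class that is not absorbed by $\U^+_\frakp$, while all other places contribute nothing. Hence $L_0$ and $L_0'$ lie in distinct proper special genera.

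Next I would deduce that $\Aut^+(L_S)=\Aut(L_S)\cap\SU(V,\Phi)$ acts type preservingly on $\calB^+$. For $g\in\SU(V,\Phi)$ and any lattice $M$ the image $Mg$ lies in the proper special genus of $M$ (take the adelic factor to be $1$ in the definition of the proper special genus). By the correspondence of the previous step, the vertices of $\calB^+$ of a fixed type (a single $\U^+_\frakp$-orbit) correspond to the lattices of a single proper special genus, and the interior types of the chain are elementary divisor invariants preserved by all of $\U_\frakp$. Since the only remaining ambiguity is the oriflamme end swap, which by the first step changes the proper special genus, every $g\in\Aut^+(L_S)$ preserves the type of each vertex.

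Finally I would establish the transitivity criterion exactly as in Theorem \ref{main}. Let $\calC^+$ be the standard chamber (the oriflamme of $\calL$) and $\calD^+$ an arbitrary chamber of $\calB^+$. The type preserving group $\U^+_\frakp$ is chamber transitive on $\calB^+$, so there is $g\in\U^+_\frakp$ with $\calC^+g=\calD^+$; it sends the vertex $v$ for $L$ to a vertex of the same type, hence to a lattice in the proper special genus of $L$. If $h^+(L)=1$ this lattice is properly isometric to $L$ by some $h\in\Aut^+(L_S)$, so $gh$ fixes $v$ and $\calD^+h$ is a chamber through $v$. The chambers of $\calB^+$ through $v$ are, by Remark \ref{masschain} together with the oriflamme replacement of the maximal isotropic subspace whenever the top end is split, in bijection with the maximal chains of isotropic subspaces of $\overline{L}$ in the first two cases and with the truncated maximal chains in the last two. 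Thus transitivity of $\Aut^+(L)=\Aut(L)\cap\SU(V,\Phi)$ on these chains yields $h'\in\Aut^+(L)$ with $\calD^+hh'=\calC^+$, and $\Aut^+(L_S)$ is chamber transitive. Conversely, chamber transitivity forces transitivity on the vertices of the type of $v$, hence $h^+(L)=1$, and the induced action on the chambers through $v$ forces the stated transitivity on the (truncated) maximal chains of $\overline{L}$.
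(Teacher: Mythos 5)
Your ``if'' direction and your general plan (run the proof of Theorem \ref{main} inside $\calB^+$) do match the paper, but your converse direction contains the essential gap. You write that chamber transitivity forces transitivity on the vertices of the type of $v$, ``hence $h^+(L)=1$''. That implication does not follow. The building $\calB^+$ only parametrizes lattices in $L(\frakp)$, i.e.\ lattices agreeing with $L$ at every prime $\frakq\neq\frakp$: a vertex of the type of $v$ corresponds to a lattice $M\in L(\frakp)$ with $M_\frakp\in L_\frakp\,\U^+_\frakp$, and vertex transitivity shows only that all \emph{such} lattices are properly isometric to $L$. The proper special genus of $L$, however, consists of all $Lgh$ with $g\in\SU(V,\Phi)$ and $h\in\U^+(V_A,\Phi)$; such a lattice may differ from $L$ at arbitrarily many primes and then corresponds to no vertex of $\calB^+$ whatsoever. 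Your step-two assertion that ``the vertices of $\calB^+$ of a fixed type correspond to the lattices of a single proper special genus'' is therefore false as a statement about sets of lattices; it becomes true only up to proper isometry, and that is precisely the point at issue. The missing bridge is Kneser's strong approximation theorem for $\U^+(V_A,\Phi)$ (applicable because $V_\frakp$ is isotropic, the rank being at least $1$): given $M$ in the proper special genus of $L$, there exist $g\in\U^+_\frakp$ and $h\in\SU(V,\Phi)$ with $Mh=Lg$, after which vertex transitivity applies to $Lg$. This is the single substantive ingredient of the paper's proof (cited there as \cite{KneserApp}), and it appears nowhere in your proposal; without it the converse direction is incomplete.

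A second, lesser, problem is your first step. To show that $L_0$ and $L_0'$ lie in different proper special genera you argue that the local element carrying $(L_0)_\frakp$ to $(L_0')_\frakp$ ``is not absorbed by $\U^+_\frakp$'' and that the other places ``contribute nothing''. But the definition of the proper special genus allows a global factor $g\in\SU(V,\Phi)$, and such a $g$ may have nontrivial spinor norm at $\frakp$, compensated at the remaining places by spinor norms of the local stabilizers $\Aut((L_0)_\frakq)$; ruling this out is a genuinely global idele-class computation in the sense of O'Meara's spinor genus theory \cite{OMeara}, which your sketch never performs, so the purely local obstruction you exhibit does not suffice. The paper's own proof is silent on this claim (it proves only the transitivity criterion), so there is nothing to compare against here, but as written your argument for it is an assertion rather than a proof.
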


\begin{proof}
	The proof that the action is chamber transitive in all cases 
	is completely analogous to the proof of Theorem \ref{main}. 
	We only need to show that $h^+(L) = 1$.
	So let $M$ be some lattice in the same proper special genus as $L $. 
	By strong approximation for $\U^+(V_A,\Phi )$ (see \cite{KneserApp}), 
	there is some element $g\in \U_{\frakp }^+ $ and $h\in \SU(V,\Phi )$ 
	such that $Mh=Lg$. As $\Aut ^+(L_S) $ is chamber transitive and type preserving, 
	there is some $f\in \Aut ^+(L_S) $ such that $Lf = Mh $ so $M=Lfh^{-1} $ is 
	properly isometric to $L$. 
\end{proof}

To obtain a classification of all chamber transitive discrete actions on $\calB^+$ 
we hence need a classification of all proper spinor genera with proper class number 
one. The thesis \cite{Kirschmer} only lists the genera of class number one and two. 
In some cases, $h(L) = h^+(L) $ for every square-free lattice $L$, for example if:
\begin{itemize}
	\item[(a)] $E=K$, $\dim(V) \geq 5$ and $K$ has narrow class number one 
		(\cite[Theorem 102.9]{OMeara}),
	\item[(b)] $[E:K] = 2$ and $\dim_E(V) $ is odd (\cite{ShimuraUnitary}),
	\item[(c)] or $[E:K] = 4$. 
\end{itemize}

\section{The one-class genera of fine admissible lattice chains}\label{oneclasschains}

We split this section into three subsections dealing with the
different types of hermitian spaces
($[E:K] = 1,2,4 $).
The fourth subsection comments on the exceptional groups.

Suppose $\calL = (L_0,\dots,L_r)$ is  a fine $\frakP$-admissible lattice chain 
of class number one, where $\frakP$ is a maximal two sided ideal of $\MO $.
Then $\frakp := \frakP \cap \Z_K$ together with $L_0$ determines the 
isometry class of $\calL := \calL(L_0,\frakp )$. 
Moreover $L_0$ is a $\frakp $-normalized lattice in $(V,\Phi )$ of class number one
and by Corollary \ref{einklassig} the 
finite group $\Aut(L_0)$ acts transitively on the fine chains of (isotropic) subspaces of $\overline{L_0}$  as in Remark \ref{masschain}.
The one- and two-class genera of lattices in hermitian spaces $(V,\Phi )$
have been  classified in \cite{Kirschmer}.
For all such lattices $L_0$ and all prime ideals $\frakp $, for which $L_0$ is $\frakp$-normalized, 
we check by computer if $\Aut (L_0)$ acts transitively on the
fine chains of (isotropic) subspaces of $\overline{L_0}$.
Note that the number  of such chains grows with the norm of $\frakp$, so the 
order of $\Aut(L_0)$ gives us a bound on the possible prime ideals $\frakp $.
We also checked weaker conditions (similar to the ones in Theorem \ref{classtwo}) that would imply a chamber transitive action on the thick building $\calB ^+$,
i.e. $h(L_0) \leq 2 $ and transitivity only on the truncated
maximal chains. The cases $h(L_0) = 2$ never gave a transitive action
on the chambers of $\calB ^+$.

For any non-empty subset $T$ of $\{1,2,\dots,r\}$ we list the automorphism group $G_T$ of the subchain $(L_i)_{i \in T}$.
With our applications on the action on buildings in mind, we also 
give the order of $$G_T^+:= G_T \cap \U_{\frakp}^+  $$ 
where $\U_{\frakp}^+$ is given in
Remark \ref{completiongroup}.
Note that we will always assume that the rank of the group $\U_{\frakp }$ 
is $r\geq 2$. 

\subsection{Quadratic forms}

In this section suppose that $E=K$.
We denote by $\A_n, \B_n, \D_n, \E_n$ the root lattices of the same type over $\Z_K$.
If $L$ is a lattice and $a\in K$ we denote by 
${}^{(a)} L $ the lattice $L$ with form rescaled by $a$. 
Sometimes we identify lattices over number fields using 
the trace lattice. For instance $(\E_8)_{\sqrt{-3}} $ 
denotes a hermitian lattice over $\Z [ \frac{1+\sqrt{-3}}{2} ]$ of dimension
4 whose trace lattice over $\Z $ is isometric to $\E_8$.

\subsubsection{Quadratic forms in more than four variables} 

If $E=K$, $\dim _K(V) \geq 5$  and $(V,\Phi )$ contains 
a one-class genus of lattices, then by \cite[Section 7.4]{Kirschmer} 
either  $K=\Q $ or $K=\Q[\sqrt{5}]$ where one has essentially 
one one-class genus of lattices of dimension 5 and 6 each. 
The rational lattices have been classified in 
\cite{KL} and are available electronically from \cite{homepage}. 

\begin{prop}\label{qf}
	If $E=K$, $\dim _K(V) \geq 5$  and $(V,\Phi )$ contains 
	a fine $\frakp$-admissible lattice chain $\calL (L_0,\frakp ) $ of class number one for some prime ideal $\frakp $, 
	then $K=\Q $ and $\calL (L_0,\frakp )$ is one of the following 
	nine essentially different chains: 
\begin{enumerate}
	\item $\calL(\E_8, 2) = (\E_8, \D_8, \D_4 \perp \D_4, {}^{(2)}\D_8^\#, {}^{(2)} \E_8)$.
After applying the oriflamme construction, the lattice chain becomes
\begin{center}
\begin{tikzpicture}[scale=0.75,-,mynode/.style={circle,draw,inner sep=0pt, minimum size=15pt}]
\node[mynode, label=left:{$\E_8$}] (0) at (-1,2.5) {$0$};
\node[mynode, label=right:{$\E_8'$}] (00) at (1,2.5) {$0'$};
\node (1) at (0,1.25) {$\bullet$};
\node[mynode, label=right:{$\D_4 \perp \D_4$}] (2) at (0,0) {$2$};
\node (3) at (0,-1.25) {$\bullet$};
\node[mynode, label=left:{${}^{(2)} \E_8$}] (4) at (-1,-2.5) {$4$};
\node[mynode, label=right:{${}^{(2)} \E_8'$}] (44) at (1,-2.5) {$4'$};
\draw (0) -- (1.center) -- (2) -- (3.center) -- (4);
\draw (00) -- (1.center);
\draw (3.center) -- (44);
\end{tikzpicture}
\end{center}
The  automorphism groups are as follows
\[
\begin{array}{ccl}
T & G_T & \# G_T^+ \\ \hline
\{i\} & 2.\GO^+_8(2).2 & 2^{13}  \cdot 3^5  \cdot 5^2 \cdot 7 \\
\{2\} & \Aut(\D_4) \wr C_2  & 2^{13} \cdot 3^4 \\
\{i,j\}& 2^{1+6}_+.S_8 &  2^{13} \cdot  3^2  \cdot 5 \cdot 7 \\
\{2,i\}& N.(S_3 \times S_3 \wr C_2) & 2^{13} \cdot 3^3\\
\{i,j,k\} & 2^{1+6}_+.(C_3^2 . \PSL_2(7))  & 2^{13} \cdot 3 \cdot 7\\
\{2,i,j\} & N.(C_2 \times S_3 \wr C_2) & 2^{13} \cdot 3^2\\ 
\{0,0',4,4'\} & 2^{1+6}_+.(C_2^3:S_4)  & 2^{13} \cdot 3 \\
\{2,i,j,k\} & N.(C_2^3 \times S_3) & 2^{13} \cdot 3 \\
\{0,0',2,4,4'\} & N.C_2^3 & 2^{13} 
\end{array}
\]
where $N=O_2(G_{\{2\}}) \isom 2_+^{1+4} \times 2_+^{1+4}$ and $i,j,k \in \{0,0',2,4,4'\}$ with $\#\{i,j,k\} = 3$.

\item $\calL(\E_7, 2) = (\E_7, \D_6 \perp \A_1, \D_4 \perp {}^{(2)}\B_3, {}^{(2)}\B_7)$.
After applying the oriflamme construction, the lattice chain becomes
\begin{center}
\begin{tikzpicture}[scale=0.75,-,mynode/.style={circle,draw,inner sep=0pt, minimum size=15pt}]
\node[mynode, label=left:{$\E_7$}] (0) at (-1,2) {$0$};
\node[mynode, label=right:{$\E_7'$}] (00) at (1,2) {$0'$};
\node (1) at (0,1) {$\bullet$};
\node[mynode, label=right:{$\D_4 \perp {}^{(2)} \B_3$}] (2) at (0,0) {$2$};
\node[mynode, label=right:{${}^{(2)}\B_7$}] (3) at (0,-1.4) {$3$};
\draw (0) -- (1.center) -- (2) -- (3);
\draw (00) -- (1.center);
\end{tikzpicture}
\end{center}
The  automorphism groups are as follows
\[
\begin{array}{ccl}
T & G_T & \# G_T^+ \\ \hline
\{i\} & C_2 \times \PSp_6(2) & 2^9 \cdot 3^4 \cdot 5 \cdot 7 \\
\{2\} & \Aut(\D_4) \times C_2 \wr S_3 &  2^{9}  \cdot 3^3  \\
\{3\} & C_2 \wr S_7 & 2^{9} \cdot 3^2 \cdot 5 \cdot 7 \\ 
\{0,0'\} & C_2^6.S_6 & 2^9 \cdot 3^2 \cdot 5 \\  
\{i,2\} & N.S_3^2 & 2^9 \cdot 3^2 \\ 
\{i,3\} & C_2^7.\PSL_2(7) & 2^9 \cdot 3 \cdot 7 \\
\{2,3\} & N.(C_2 \times S_3^2) & 2^{9} \cdot 3^2 \\
\{0,0',2\}, \{i,2,3\} & N.D_{12} & 2^9 \cdot 3 \\
\{0,0',3\} & C_2^7.S_4 & 2^9 \cdot 3 \\ 
\{0,0',2,3\} & N.C_2^2 & 2^9  
\end{array}
\]
where $N:= O_2(G_{\{2\}}) \isom 2^{1+4}_+ \times Q_8$ and $i \in \{0,0'\}$.
The one-class chain 
$$\calL(\B_7, 2) = \{ \B_7, {}^{(2)}(\D_4^\# \perp B_3), {}^{(2)}\D_6^\# \perp \B_1, {}^{(2)} \E_7^\#\}$$ yields the same stabilizers.

\item $\calL(\A_6, 2) = \{ \A_6, X, {}^{(2)} X^{\#,2}, {}^{(2)} \A_6 \}$. Here $X$ is an indecomposable lattice with $\Aut(X) = (C_2^4 \times C_3).D_{12}$. %% $\Aut(X) \isom C_2^2 \times S_3 \times S_4$.
After applying the oriflamme construction, the lattice chain becomes
\begin{center}
\begin{tikzpicture}[scale=0.75,-,mynode/.style={circle,draw,inner sep=0pt, minimum size=15pt}]
\node[mynode, label=left:{$\A_6$}] (0) at (-1,2) {$0$};
\node[mynode, label=right:{$\A_6'$}] (00) at (1,2) {$0'$};
\node (1) at (0,1) {$\bullet$};
\node (2) at (0,0) {$\bullet$};
\node[mynode, label=left:{${}^{(2)} \A_6$}] (3) at (-1,-1) {$3$};
\node[mynode, label=right:{${}^{(2)}\A_6'$}] (33) at (1,-1) {$3'$};
\draw (0) -- (1.center) -- (2.center) -- (3);
\draw (00) -- (1.center);
\draw (33) -- (2.center);
\end{tikzpicture}
\end{center}
The  automorphism groups are as follows
\[
\begin{array}{cclc}
	T & G_T & \# G_T^+  & sgdb \\ \hline
	\#T = 1 & C_2 \times S_7 & 2^3 \cdot 3^2 \cdot 5 \cdot 7 & -  \\
	\{0,0'\}, \{3,3'\} & C_2 \times S_3 \times S_4 & 2^3 \cdot 3^2 & 43 \\
	\{0,3\}, \{0,3'\},\{0',3\}, \{0	',3'\} & C_2 \times \PSL_2(7) & 2^3 \cdot 3 \cdot 7  & 42\\
	\# T = 3 & C_2 \times S_4 & 2^3 \cdot 3 & 12\\
	\{0,0', 3,3'\} & C_2 \times D_8 & 2^3 & 3
\end{array}
\]
Here, and in the following tables, the column sgdb gives the label of 
$G_T^+$ 
   as defined by the small group database (\cite{SGDB}).

The admissible one-class chain 
$$\calL({}^{(7)}\A_6^\#,2) = \{ {}^{(7)}\A_6^\#, {}^{(7)} X^{\#, 7}, {}^{(14)} X^\#, {}^{(14)}\A_6^\# \}$$ yields the same groups.

\item $\calL(\E_6, 2) = \{ \E_6, Y_0, \D_4 \perp {}^{(2)}\A_2 \}$.
	Here $Y_0$ is the even sublattice of $\B_5 \perp {}^{(3)} \B_1$. It is indecomposable and  $\Aut(Y_0) = \Aut(\B_5 \perp {}^{(3)} \B_1 ) \isom C_2 \times C_2 \wr S_5$.
After applying the oriflamme construction, the lattice chain becomes
\begin{center}
\begin{tikzpicture}[scale=0.75,-,mynode/.style={circle,draw,inner sep=0pt, minimum size=15pt}]
\node[mynode, label=left:{$\E_6$}] (0) at (-1,2) {$0$};
\node[mynode, label=right:{$\E_6'$}] (00) at (1,2) {$0'$};
\node (1) at (0,1) {$\bullet$};
\node[mynode, label=right:{$\D_4 \perp {}^{(2)} \A_2$}] (2) at (0,0) {$2$};
\draw (0) -- (1.center) -- (2);
\draw (00) -- (1.center);
\end{tikzpicture}
\end{center}
The  automorphism groups are as follows
\[
\begin{array}{cclc}
	T & G_T & \# G_T^+  & sgdb \\ \hline
	\{i\} &  C_2 \times \U_4(2).2 & 2^6 \cdot 3^4 \cdot 5& -  \\
	\{2\} & \Aut(\D_4) \times D_{12} & 2^6 \cdot 3^3 & - \\
	\{0,0'\} & C_2 \wr S_5 & 2^6 \cdot 3 \cdot 5 & 11358 \\ 
	\{i,2\} & N.S_3^2 & 2^6 \cdot 3^2  & 8277 \\
	\{0,0',2\} & N.D_{12} & 2^6 \cdot 3 & 201
\end{array}
\]
where $N=O_2(G_3) \isom 2_+^{1+4} \times C_2$ and $i \in \{0,0'\}$.
The admissible one-class chains 
\begin{align*}
	\calL(\A_2 \perp \D_4, 2) &= \{ \A_2 \perp \D_4, {}^{(2)} Y^{\#,2}, {}^{(2)} \E_6 \} \\ 
	\calL({}^{(3)} (\A_2^\# \perp \D_4) ,2) &= \{ {}^{(3)} (\A_2^\# \perp \D_4), {}^{(6)} Y^\#, {}^{(6)} \E_6 \} \\
	\calL({}^{(3)} \E_6^\#, 2) &= \{ {}^{(3)} \E_6^\#, {}^{(3)} Y^{\#, 3}, {}^{(3)} (\A_2 \perp \D_4)^{\#,3} \}
\end{align*}
yield the same stabilizers.

\item $\calL(\D_6, 2) = \{ \D_6, \D_4 \perp {}^{(2)} \B_2, {}^{(2)}\B_6 \}$.
Here the application of the oriflamme construction is not necessary.
The  automorphism groups are as follows
\[
\begin{array}{cclc}
	T & G_T & \# G_T^+ & sgdb\\ \hline
	\{0\}, \{2\} & C_2 \wr S_7 & 2^8 \cdot 3^2 \cdot 5  & - \\
	\{1\} & \Aut(\D_4) \perp C_2 \wr S_2 & 2^8 \cdot 3^2  & - \\
	\{ 0,1 \}, \{1,2\} & C_2^6.(C_2\times S_4) & 2^8 \cdot 3 & 1086007 \\ 
	\{0,2\} &  C_2^6.(C_2 \times S_4) &  2^8 \cdot 3  & 1088660 \\
	\{0,1,2\} & C_2^6.(C_2 \times D_8) & 2^8    & 6331
\end{array}
\]

\item $\calL(\E_6, 3) = \{ \E_6, \A_2^3, {}^{(3)}\E_6 \}$.
Here the application of the oriflamme construction is not necessary.
The  automorphism groups are as follows
\[
\begin{array}{cclc}
	T & G_T & \# G_T^+ & sgdb \\ \hline
	\{0\}, \{2\} & C_2 \times \U_4(2).2 & 2^6 \cdot 3^4 \cdot 5 & - \\
	\{1\} & D_{12} \wr S_3  & 2^5 \cdot 3^4 & - \\
	\{0,2\} & 3^{1+2}_+.(C_2 \times \GL_2(3)) & 2^3 \cdot 3^4 & 533 \\ 
	\{0,1\}, \{1,2\} & N.(C_2^2 \times S_4) & 2^3 \cdot 3^4  & 704\\ 
	\{0,1,2\} & N.(C_2^2 \times S_3) & 2 \cdot 3^4 & 10 \\
\end{array}
\]
where $N=O_3(G_{\{1\}}) \isom C_3^3$.

\item $\calL(\B_5 \perp {}^{(3)} \B_1, 3) = \{ \B_5 \perp {}^{(3)} \B_1, \B_2 \perp \A_2 \perp {}^{(3)} \B_2; \B_1 \perp {}^{(3)} \B_5 \}$.
Here the application of the oriflamme construction is not necessary.
The  automorphism groups are as follows
\[
\begin{array}{cclc}
	T & G_T & \# G_T^+ & sgdb \\ \hline
	\{0\}, \{2\} & C_2 \times C_2 \wr S_5 & 2^6 \cdot 3 \cdot 5 & - \\
	\{1\} & C_2 \wr S_2 \times D_{12} \times C_2 \wr S_2 & 2^5 \cdot 3 & 144  \\
	\{0,2\} & C_2^2\times \GL_2(3) & 2^3 \cdot 3 & 3  \\
	\{0,1\}, \{1,2\} & C_2^2 \times D_8 \times S_3 & 2^3 \cdot 3 & 8  \\
	\{0,1,2\} & C_2^3 \times S_3 & 2 \cdot 3 & 2
\end{array}
\]
For $0\le i \le 2$ let $Y_i$ be the even sublattice of $\calL(\B_5 \perp {}^{(3)} \B_1, 3)_i$, see also part (4).
Then the admissible one-class chains
$$\calL(Y_0, 3) = \{ Y_0, Y_1, Y_2 \} \mbox{ and } \calL({}^{(2)} Y_0^{\#,2}, 3) = ( {}^{(2)} Y_0^{\#,2}, {}^{(2)} Y_1^{\#,2}, {}^{(2)} Y_2^{\#,2}  )$$
yield the same groups.

\item $\calL(\A_5, 2) = \{\A_5, {}^{(2)} \B_1 \perp Z, {}^{(2)} (\A_2 \perp \B_3)\}$. 
	Here $Z$ is the even sublattice of $\B_3 \perp {}^{(3)} \B_1$ and $\Aut(Z) = \Aut( \B_3 \perp {}^{(3)} \B_1 )$.
After applying the oriflamme construction, the lattice chain becomes
\begin{center}
\begin{tikzpicture}[scale=0.75,-,mynode/.style={circle,draw,inner sep=0pt, minimum size=15pt}]
\node[mynode, label=left:{$\A_5$}] (0) at (-1,2) {$0$};
\node[mynode, label=right:{$\A_5'$}] (00) at (1,2) {$0'$};
\node (1) at (0,1) {$\bullet$};
\node[mynode, label=right:{${}^{(2)} (\A_2 \perp \B_3)$}] (2) at (0,0) {$2$};
\draw (0) -- (1.center) -- (2);
\draw (00) -- (1.center);
\end{tikzpicture}
\end{center}
The  automorphism groups are as follows
\[
\begin{array}{cclc}
	T & G_T & \# G_T^+ & sgdb   \\ \hline
	\{0\}, \{0'\}  & C_2 \times S_6 & 2^3 \cdot 3^2 \cdot 5 & 118 \\ 
	\{2\}  & D_{12} \times C_2 \wr S_3 & 2^3 \cdot 3^2  & 43 \\
	\# T = 2 & C_2^2 \times S_4 & 2^3 \cdot 3 & 12 \\
	\{0,0',2\}  & C_2^2 \times D_8 & 2^3  & 3
\end{array}
\]
The admissible one-class chains
\begin{align*}
	\calL({}^{(3)}\A_5^{\#,3},2) &= \{ {}^{(3)}\A_5^{\#,3}, {}^{(6)}\B_1 \perp {}^{(3)} Z^{\#, 3}, {}^{(6)} (\A_2^\# \perp \B_3) \} \\
	\calL(\A_2 \perp \B_3 ,2) &= \{ \A_2 \perp \B_3 ,  \B_1 \perp {}^{(2)} Z^{\#,2}, {}^{(2)} \A_5^{\#, 2} \} \\
	\calL( {}^{(3)}(\A_2^\# \perp \B_3), 2) &= \{  {}^{(3)}(\A_2^\# \perp \B_3), {}^{(3)}\B_1 \perp {}^{(6)} Z^\#, {}^{(6)} \A_5^\# \}
\end{align*}
yield the same stabilizers.

\item $\calL(\B_5, 3) = \{ \B_5, \B_2 \perp \A_2 \perp {}^{(3)} \B_1, \B_1 \perp {}^{(3)} \B_4 \}$.
After applying the oriflamme construction, the lattice chain becomes
\begin{center}
\begin{tikzpicture}[scale=0.75,-,mynode/.style={circle,draw,inner sep=0pt, minimum size=15pt}]
\node[mynode, label=left:{$\B_5$}] (0) at (-1,2) {$0$};
\node[mynode, label=right:{$\B_5'$}] (00) at (1,2) {$0'$};
\node (1) at (0,1) {$\bullet$};
\node[mynode, label=right:{$(\B_1 \perp {}^{(3)}\B_4)$}] (2) at (0,0) {$2$};
\draw (0) -- (1.center) -- (2);
\draw (00) -- (1.center);
\end{tikzpicture}
\end{center}
\[
\begin{array}{cclc}
	T & G_T & \# G_T^+  & sgdb  \\ \hline
	\{i\}  & C_2 \wr S_5 & 2^6  \cdot 3 \cdot 5  & 11358 \\ 
	\{2\}  & C_2 \wr S_4 \times C_2 & 2^5 \cdot 3 & 204 \\ 
	\{0,0'\} & (C_2 \times D_8) \times S_3 & 2^3 \cdot 3 & 3\\ 
	\{i,2\}  & C_2 \times \GL_2(3) & 2^3 \cdot 3  & 8 \\ 
	\{0,0',2\}  & C_2^2 \times S_3 & 2 \cdot 3  & 2 \\ 
\end{array}
\]
where $i \in \{0,0'\}$. The admissible one-class chain 
\[ \calL( \B_4 \perp {}^{(3)} \B_1 ,3) = \{  \B_4 \perp {}^{(3)} \B_1, \B_1 \perp \A_2 \perp {}^{(3)}  \B_2, {}^{(3)} \B_5 \} \]
yields the same stabilizers.

\end{enumerate}
\end{prop}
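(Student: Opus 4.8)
The plan is to deduce the whole statement from Corollary~\ref{einklassig}, which turns the condition ``$\calL(L_0,\frakp)$ has class number one'' into two separate requirements on the base lattice $L_0$: that $L_0$ lie in a one-class genus of single lattices, and that $\Aut(L_0)$ act transitively on the maximal chains of isotropic subspaces of the quadratic space $\overline{L_0}=L_0/\frakp L_0^{\#,\frakp}$ over $\MO/\frakp$ (see Remark~\ref{masschain}). Thus it suffices to enumerate all pairs $(L_0,\frakp)$ with $L_0$ a $\frakp$-normalized one-class lattice of rank $\dim_K V\geq 5$ for which the transitivity holds, and then to identify the resulting chains.

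First I would restrict the field and the candidate lattices. Since $L_0$ must be a one-class genus lattice of dimension at least $5$, \cite[Section~7.4]{Kirschmer} forces $K=\Q$ or $K=\Q[\sqrt5]$ and leaves only finitely many genera. For each genus, the requirement that $L_0$ be $\frakp$-normalized (Definition~\ref{normalised}), reached from a maximal integral representative by the reduction operators $\rho_\frakP$ of Section~\ref{normalizedgenera}, fixes the local shape at $\frakp$; over $\Q$ the relevant lattices are precisely those classified in \cite{KL} and available from \cite{homepage}.

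The step I expect to be the crux is showing that only finitely many primes $\frakp$ can occur, so that the search terminates. The number $N$ of maximal chains of isotropic subspaces of $\overline{L_0}$ is given by the product formulas in \cite[Exercises~8.1, 10.4, 11.3]{Taylor} and, once the rank is $r\geq 2$, grows without bound with the residue field order $q=\#(\MO/\frakp)$. Transitivity of the finite group $\Aut(L_0)$ forces $N$ to divide $\#\Aut(L_0)$, a quantity fixed by the genus of $L_0$ alone. Hence $q$, and therefore $\frakp$, is bounded, leaving only finitely many candidate pairs $(L_0,\frakp)$.

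It then remains to check the transitivity condition for each surviving pair, which I would carry out by computer. This eliminates $K=\Q[\sqrt5]$ outright, since there the automorphism groups are too small to act transitively on the flags, and over $K=\Q$ it produces exactly the nine chains listed, up to rescaling and partial duality (which is why several chains in the statement share the same stabilizers). For each chain the oriflamme construction of Remark~\ref{oriflamme} yields the displayed configuration in the thick building, and the groups $G_T$ of the subchains $(L_i)_{i\in T}$ together with the orders $\#G_T^+$ are obtained by direct computation of the stabilizers inside $\U(V,\Phi)$.
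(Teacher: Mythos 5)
Your proposal is correct and follows essentially the same route as the paper: reduce via Corollary~\ref{einklassig} to the two conditions on $L_0$ (one-class genus plus transitivity of $\Aut(L_0)$ on maximal chains of isotropic subspaces of $\overline{L_0}$), restrict to $K=\Q$ or $\Q[\sqrt5]$ by \cite[Section 7.4]{Kirschmer} and the lists of \cite{KL}, bound the admissible primes $\frakp$ because the number of chains grows with the residue field while $\#\Aut(L_0)$ is fixed, and finish by a computer check of transitivity and of the stabilizers after the oriflamme construction. This matches the paper's argument in Section~\ref{oneclasschains}, including the key finiteness observation that makes the search terminate.
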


\subsubsection{Quadratic forms in four variables} 

Now assume that $K=E$ and $\dim _K(V) = 4$.
By \cite[Theorem 7.4.1]{Kirschmer} there are up to similarity exactly 481 
one-class genera of lattices if $K=\Q$ and additionally 
604 such genera over 21 other base fields where the largest degree is 
$[K:\Q ] =5$ (\cite[Theorem 7.4.2]{Kirschmer}). 
As we are only interested in the case where 
the rank of $\U_{\frakp }$ is $2$, we only need to consider pairs
$(L,\frakp )$  where $L$ is one of these 1095 lattices and $\frakp $ 
a prime ideal such that 
$V_{\frakp} \cong \HH (K_{\frakp} )   \perp \HH (K_{\frakp }) $. 
In this case the building ${\mathcal B}$ of $\U_{\frakp} $ is of type $A_1 \oplus A_1 $ 
and not connected even after oriflamme construction. 
We will not list the groups acting chamber transitively on ${\mathcal B}^+$,
also because of the numerous cases of one-class lattice chains 
in this situation.

To list the lattices we 
 need some more notation.
We denote by $\calQ:= \calQ_{\alpha, \infty, \frakp_1,\dots, \frakp_s}$ a definite quaternion algebra over $K= \Q(\alpha)$ which ramifies exactly at the finite places $\frakp_1,\dots,\frakp_s$ of $K$.
Given an integral ideal $\fraka$ of $\Z_K$ coprime to all $\frakp_i$, then 
$\calO_{\alpha, \infty, \frakp_1,\dots, \frakp_s; \fraka}$ denotes an Eichler order of level $\fraka$ in $\calQ$.

We omit the subscript $\alpha$ whenever $K=\Q$.
Similarly, the subscript $\fraka$ is omitted, if $\fraka=\Z_K$, i.e. the order is maximal.

Then $\calO_{\alpha, \infty, \frakp_1,\dots, \frakp_s; \fraka}$ with the reduced norm form of $\calQ$ yields a quaternary lattice over $\Z_K$.
By \cite[Corollary 4.6]{Nebe} this lattice is unique in its genus, if and only if all Eichler orders of level $\fraka$ in $\calQ$ are conjugate.

Hence we identify such orders with their quaternary lattices.

\begin{prop}
Let $L$ be a $\frakp$-normalized, quaternary lattice over $\Z_K$ such that $\calL(L, \frakp)$ is a 
fine $\frakp$-admissible lattice chain of length $2$ and 
class number one.
Then one of the following holds.
\begin{enumerate}
\item $K=\Q$ and either
\begin{itemize}
\item $\frakp = 2$ and $L \isom \calO_{\infty,3} \isom \A_2 \perp \A_2$ or $\calO_{\infty, 5}$.
\item $\frakp \in \{3,5,11\}$ and $L \isom \calO_{\infty,2} \isom \D_4$.
\item $\frakp = 3$ and $L \isom \B_4$.
\end{itemize}
\item $K = \Q(\sqrt{5})$ and either
\begin{itemize}
\item $\Nr_{K/\Q}(\frakp) \in \{ 4,5,9,11,19,29,59 \}$ and $L \isom \calO_{\sqrt{5},\infty}$. This lattice is called $H_4$ in \cite{Scharlau}.
\item $\Nr_{K/\Q}(\frakp) \in \{ 5, 11\}$ and $L  \isom \calO_{\sqrt{5},\infty; 2 \Z_K} \isom \D_4$.
\item $\frakp = 2\Z_K$ and $L  \isom \calO_{\sqrt{5},\infty; \fraka} \isom\calL( \calO_{\sqrt{5},\infty} , \fraka)_2$ with $\Nr_{K/\Q}(\fraka) \in \{5,11\} $.
\end{itemize}
\item $K = \Q(\sqrt{2})$ and either
\begin{itemize}
\item $\Nr_{K/\Q}(\frakp) \in \{ 2, 7, 23 \}$ and $L \isom \calO_{\sqrt{2},\infty}$.
\item $\Nr_{K/\Q}(\frakp) = 7$ and $L \isom \calO_{\sqrt{2},\infty; \sqrt{2} \Z_K} \isom \calL( \calO_{\sqrt{2},\infty} )_2$ or $L$ is isometric to a unimodular lattice of norm $\sqrt{2}\Z_K$ in $(V,\Phi) \isom \langle 1,1,1,1\rangle$.
  By \cite[IX:93]{OMeara}, the genus of the latter lattice is uniquely determined and it has class number one by \cite{Kirschmer}.
\item $\frakp = \sqrt{2}\Z_K$ and $L \isom \calO_{\sqrt{5},\infty; \fraka} \isom \calL( \calO_{\sqrt{5},\infty}, \fraka )_2$ with $\Nr_{K/\Q}(\fraka) = 7$.
\end{itemize}
\item $K = \Q(\sqrt{3})$ and either
\begin{itemize}
  \item $\frakp = \sqrt{3} \Z_K$ and $L\isom \calO_{\sqrt{3},\infty;\frakp_2}$ or $L$ is isometric to a unimodular lattice of norm $\frakp_2$ in $(V,\Phi) \isom \langle 1,1,1,1\rangle$. Again, this lattice is unique up to isometry.
  \item $\frakp = \frakp_2$ and $L \isom \calO_{\sqrt{3},\infty;\sqrt{3} \Z_K}$.
\end{itemize}
\item $K = \Q(\sqrt{13})$ and $\Nr_{K/\Q}(\frakp) = 3$ and $L \isom \calO_{\sqrt{13},\infty}$. This lattice is called $D_4^\sim$ in \cite{Scharlau}.
\item $K = \Q(\sqrt{17})$ and $\Nr_{K/\Q}(\frakp) = 2$ and $L \isom \calO_{\sqrt{17},\infty}$. This lattice is called $(2A_2)^\sim$ in \cite{Scharlau}.
\item $K = \Q(\theta_9)$ is the maximal totally real subfield of the cyclotomic field $\Q(\zeta_9)$ and $\frakp = 2\Z_K$ and  $L \isom \calO_{\theta,\infty, \frakp_3}$.
\item $K = \Q(\alpha) \isom \Q[X]/(X^3-X^2-3X+1)$ is the unique totally real number field of degree $3$ and discriminant $148$. 
  Then either $\frakp= \frakp_5$ and $L \isom \calO_{\alpha,\infty;\frakp_2}$ or $\frakp= \frakp_2$ and $L \isom \calO_{\alpha,\infty;\frakp_5}$.
\item $K = \Q(\alpha) \isom \Q[X]/(X^3-X^2-4X+2)$ is the unique totally real number field of degree $3$ and discriminant $316$. 
  Then $\frakp=\frakp_2$ and $L \isom \calO_{\alpha, \infty; \frakp_4}$.
\item $K = \Q(\alpha) \isom \Q[X]/(X^4-X^3-3X^2+X+1)$ is the unique totally real number field of degree $4$ and discriminant $725$.
  Then $L \isom \calO_{\alpha, \infty}$ and $\Nr_{K/\Q}(\frakp) \in \{11, 19\}$ or $\frakp$ is the ramified prime ideal of norm $29$.
\item $K = \Q(\alpha) \isom \Q[X]/(X^4-4X^2-X+1)$ is the unique totally real number field of degree $4$ and discriminant $1957$. Then $\frakp = \frakp_3$ and $L \isom \calO_{\alpha, \infty}$.
\item $K = \Q(\alpha) \isom \Q[X]/(X^4-X^3-4X^2+X+2)$ is the unique totally real number field of degree $4$ and discriminant $2777$. 
  Then $\frakp = \frakp_2$ and $L \isom \calO_{\alpha, \infty}$.
\end{enumerate}
Here $\frakp_q$ denotes a prime ideal of $\Z_K$ of norm $q$.
Conversely, in all these cases the chain $\calL(L, \frakp)$ is $\frakp$-admissible and has class number one.
\end{prop}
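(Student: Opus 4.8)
The plan is to reduce everything to the criterion of Corollary~\ref{einklassig}: the chain $\calL(L,\frakp)$ represents a one-class genus of lattice chains if and only if $L$ represents a one-class genus of lattices and $\Aut(L)$ acts transitively on the maximal chains of isotropic subspaces of the reduction $\overline{L}$. Since we are in the quadratic case $E=K$ with $\dim_K V = 4$ and we require the chain to have length $2$, the remark following Definition~\ref{admissible} forces the rank $r$ of $\U_\frakp$ (Remark~\ref{completiongroup}) to equal $2$; equivalently $(V_\frakp,\Phi) \cong \HH(K_\frakp)\perp\HH(K_\frakp)$ is split of Witt index $2$. For a $\frakp$-normalized $L$ this means $L_\frakp$ is unimodular, so $\overline{L}=L/\frakp L$ is a non-degenerate quadratic space of dimension $4$ and Witt index $2$ over the residue field $\F_q$ with $q=\Nr_{K/\Q}(\frakp)$, i.e. an $\GO_4^+(\F_q)$-space, and the maximal chains of Remark~\ref{masschain} are the complete isotropic flags $0\subset V_1\subset V_2$ in this space.

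First I would assemble the pool of candidate lattices. By \cite[Theorems 7.4.1 and 7.4.2]{Kirschmer} the quaternary one-class genera are exhausted, up to similarity, by $481$ lattices over $\Q$ and $604$ lattices over $21$ further totally real fields of degree at most $5$; every admissible $L$ must come from this finite list, and for each the finite group $\Aut(L)$ together with its reduction modulo $\frakp$ is explicitly computable. For each $L$ I would then enumerate the prime ideals $\frakp$ for which $L$ is $\frakp$-normalized and $(V_\frakp,\Phi)$ is split. Writing $L$ as the norm form of a definite quaternion algebra $\calQ$ (an Eichler order $\calO_{\alpha,\infty,\ldots;\fraka}$, determined up to isometry by its genus via \cite[Corollary 4.6]{Nebe}), the norm form is a $2$-fold Pfister form, hence $V_\frakp$ is hyperbolic exactly when $\calQ$ splits at $\frakp$; thus $\frakp$ must avoid the ramified primes of $\calQ$ and the primes dividing the level $\fraka$.

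The key finiteness input is a counting bound. In an $\GO_4^+(\F_q)$-space the smooth quadric is isomorphic to $\mathbb{P}^1\times\mathbb{P}^1$, so it carries $2(q+1)$ maximal isotropic planes in two rulings, each containing $q+1$ isotropic points; hence the number of complete isotropic flags is $2(q+1)^2$. Transitivity of $\Aut(L)$ on these flags forces $\abs{\Aut(L)} \geq 2(q+1)^2$. Since $\abs{\Aut(L)}$ is uniformly bounded over the finite pool above (the extreme case being the icosian lattice $\calO_{\sqrt5,\infty}\cong H_4$ over $\Q(\sqrt5)$ with $\abs{\Aut}=14400$, giving $q\le 83$), the norm $\Nr_{K/\Q}(\frakp)$ is bounded, leaving only finitely many pairs $(L,\frakp)$. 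For each surviving pair I would compute the permutation action of $\Aut(L)$ on the isotropic flags of $\overline{L}$ and test transitivity by computer; the pairs that pass are precisely those recorded in the statement.

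For the converse I would verify directly, for each listed pair, that $L$ is $\frakp$-normalized (square-free, with $L_\frakp$ unimodular) and that $(V_\frakp,\Phi)$ is split of rank $2$, so that $\calL(L,\frakp)$ is a genuine fine $\frakp$-admissible chain of length $2$; one-classness then follows from the transitivity established together with the one-class property of $L$ drawn from \cite{Kirschmer}. The identifications of the middle and dual lattices of each chain with the named quaternion orders and root lattices ($\A_2\perp\A_2$, $\D_4$, $\B_4$, and so on) are routine local computations. I expect the main obstacle to be purely organizational: correctly enumerating, over all $1095$ candidate lattices and all admissible residue characteristics, the finite but sizable set of pairs and certifying transitivity for each — with the genuinely delicate point being the case $\frakp\mid 2$, where the normalization of Definition~\ref{normalised} and the geometry of $\GO_4^+$ in characteristic~$2$ must be handled separately.
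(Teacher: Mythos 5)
Your proposal is correct and follows essentially the same route as the paper: reduce via Corollary \ref{einklassig} to transitivity of $\Aut(L)$ on the isotropic flags of $\overline{L}$, draw the candidates from the 1095 quaternary one-class genera classified in \cite{Kirschmer}, bound $\Nr_{K/\Q}(\frakp)$ by comparing $\#\Aut(L)$ with the number of flags, and finish with a computer check of transitivity together with a direct verification of the converse. Your explicit flag count $2(q+1)^2$ and the resulting bound $q\le 83$ simply make quantitative the paper's remark that the number of chains grows with the norm of $\frakp$, so that the order of $\Aut(L_0)$ bounds the possible prime ideals.
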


\subsection{Hermitian forms}

In this section we treat the case that $[E:K]=2$, so 
$E$ is a totally complex extension of degree 2 of the totally 
real number field $K$. All hermitian lattices with class number 
$\leq 2$ are classified in \cite[Section 8]{Kirschmer} and 
listed explicitly for $n\geq 3$ in \cite[pp 129-140]{Kirschmer}. 

\begin{prop}\label{hf}
	Let $\calL(L_0,\frakp)$ be a fine $\frakP$-admissible chain of class number one and of length at least $2$. 
	Then $K  =\Q$, $d:= \dim_E(V) \in \{3,4,5\}$ and one the following holds:
\begin{enumerate}
  \item $E=\Q(\sqrt{-3})$, $\frakp=3\Z$ and $L_0 \isom \B_5 \tensor_\Z \Z[\tfrac{1+\sqrt{-3}}{2}] \isom (\A_2^5)_{\sqrt{-3}}$:
	$$\calL(L_0, 3) = \{ L_0, (\A_2^2 \perp {}^{(3)} \E_6^\# )_{\sqrt{-3}} , (\A_2 \perp \E_8)_{\sqrt{-3}} \}.$$
Here the application of the oriflamme construction is not necessary.
The automorphism groups are as follows:
\[
\begin{array}{ccc}
T & G_T & \# G_T^+  \\ \hline
\{0\} & C_6 \wr S_5 & 2^7 \cdot 3^5 \cdot 5 \\
\{1\} & C_6 \wr S_2 \times {}_{\sqrt{-3}} [ \pm 3^{1+2}_+.\SL_2(3)]_3  & 2^6 \cdot 3^5 \\
\{2\} & C_6 \times {}_{\sqrt{-3}}[\Sp_4(3) \times  C_3]_4 & 2^7 \cdot 3^5 \cdot 5 \\
\{0,1\} & C_6 \wr S_2 \times {}_{\sqrt{-3}} [ \pm 3^{1+2}_+.C_6 ]_3 & 2^4 \cdot 3^5 \\
\{1,2\} & C_6 \times {}_{\sqrt{-3}} [\pm(3_+^{1+2}.\SL_2(3) \times C_3) ]_4 & 2^4 \cdot 3^5 \\ 
\{0,1\} & C_6 \times {}_{\sqrt{-3}} [\pm 3^3 : S_4 \times C_3]_4 & 2^4 \cdot 3^5 \\
\{1,2\} & C_6 \times {}_{\sqrt{-3}} [\pm(3_+^{1+2}.\SL_2(3) \times C_3) ]_4 & 2^4 \cdot 3^5 \\ 
\{0,1,2\}&C_6 \times {}_{\sqrt{-3}} [\pm 3^{1+2}_+.C_6 \times C_3]_4 & 2^2 \cdot 3^5
\end{array}
\]

\item  $E=\Q(\sqrt{-7})$, $\frakp=2\Z$ and $L_0 \isom (\E_8)_{\sqrt{-7}}$:
$$\calL( (\E_8)_{\sqrt{-7}}, 2) = \{ (\E_8)_{\sqrt{-7}} , (\D_8)_{\sqrt{-7}}, (\D_4  \perp \D_4)_{\sqrt{-7}}, ({}^{(2)}\D_8)_{\sqrt{-7}} \}.$$
After applying the variant of the 
oriflamme construction described in Remark \ref{splitoriflamme}, the lattice chain becomes
\begin{center}
\begin{tikzpicture}[scale=0.75,-,mynode/.style={circle,draw,inner sep=0pt, minimum size=15pt}]
\node[mynode, label=left:{$(\E_8)_{\sqrt{-7}} \quad \isom \quad $}] (0) at (0,0) {$0$};
\node[mynode] (33) at (2,0) {$3'$};
\node[mynode] (00) at (4,0) {$0'$};
\node[mynode] (x) at (6,0) {$3$};
\node (1) at (0,-1) {$\bullet$};
\node (2) at (0,-2) {$\bullet$};
\node (3) at (0,-3) {$\bullet$};
\draw (0) -- (1.center) -- (2.center) -- (3.center);
\draw (33) -- (1.center);
\draw (00) -- (2.center);
\draw (3.center) -- (x);
\end{tikzpicture}
\end{center}
The  automorphism groups are as follows:
\[
\begin{array}{cclc}
	T & G_T & \# G_T^+ &sgdb \\ \hline
	\#T = 1 & 2.\Alt_7 & 2^4 \cdot 3^2 \cdot 5 \cdot 7 & - \\
	\{0,0'\}, \{3,3'\} & \SL_2(3) \times C_3 :2 & 2^4 \cdot 3^2 & 124 \\
	\{0,3\}, \{0,3'\},\{0',3\}, \{0 ',3'\} & \SL_2(7) & 2^4 \cdot 3 \cdot 7 & 114 \\
	\# T = 3 & 2.S_4 & 2^4 \cdot 3 & 28  \\
	\{0,0', 3,3'\} & Q_{16} & 2^4 & 9 
\end{array}
\]

\item  $E=\Q(\sqrt{-3})$, $\frakp=2\Z$ and $L_0 \isom  (\E_8)_{\sqrt{-3}}$:
	$$\calL( (\E_8)_{\sqrt{-3}}, 2) = \{ (\E_8)_{\sqrt{-3}}, (\D_4 \perp \D_4)_{\sqrt{-3}}, ({}^{(2)}\E_8)_{\sqrt{-3}}\}.$$
Here the application of the oriflamme construction is not necessary.
The automorphism groups are as follows:
\[
\begin{array}{ccl}
T & G_T & \# G_T^+ \\ \hline
\{0\}, \{2\} & \ _{\sqrt{-3}}[\Sp_4(3) \times C_3]_4   & 2^7 \cdot 3^4 \cdot 5 \\
\{1\} & \ _{\sqrt{-3}}[\SL_2(3) \times C_3]_2^2 & 2^7 \cdot 3^3 \\
\{0,2\} & 2^{1+4}_-.\Alt_5 \times C_3 & 2^7 \cdot 3 \cdot 5 \\ 
\{0,1\}, \{1,2\} & \SL_2(3) \wr C_2 \times C_3 & 2^7 \cdot 3^2 \\
\{0,1,2\} & (Q_8 \wr S_2):C_3 \times C_3 & 2^7 \cdot 3
\end{array}
\]

     \item $E = \Q(\sqrt{-1})$,  $\frakp = 2\Z$ and $L_0 \isom (\E_8)_{\sqrt{-1}}$:
	    $$\calL( (\E_8)_{\sqrt{-1}}, 2) = \{ (\E_8)_{\sqrt{-1}}, (\D_8)_{\sqrt{-1}}, (\D_4\perp \D_4) _{\sqrt{-1}} \}.$$
	    Here the application of the oriflamme construction is not necessary.
	     After applying the oriflamme construction, one obtains the following  lattices
\begin{center}
\begin{tikzpicture}[scale=0.75,-,mynode/.style={circle,draw,inner sep=0pt, minimum size=15pt}]
\node[mynode, label=left:{$(\E_8)_{\sqrt{-1}}$}] (0) at (-1,2) {$0$};
\node[mynode, label=right:{$(\E_8)_{\sqrt{-1}}'$}] (00) at (1,2) {$0'$};
\node (1) at (0,1) {$\bullet$};
\node[mynode, label=right:{$(\D_4\perp \D_4) _{\sqrt{-1}}$}] (2) at (0,0) {$2$};
\draw (0) -- (1.center) -- (2);
\draw (00) -- (1.center);
\end{tikzpicture}
\end{center}
	     \[
		     \begin{array}{ccl}
			     T & G_T & \# G_T^+ \\ \hline
			     \{0\}, \{0'\} & {}_i[(2^{1+4}_+ {\textsc Y} C_4).S_6]_4  & 2^9 \cdot 3^2 \cdot 5 \\
			     \{2\} & \ _{i}[(D_8 {\textsc Y} C_4 ).S_3]_2^2 & 2^9 \cdot 3^2 \\
			     \{ 0,2 \}, \{0',2\} &  & 2^9 \cdot 3 \\ 
			     \{0,0'\} &   &  2^9 \cdot 3 \\
			     \{0,2,0'\} & & 2^9   
		     \end{array}
	     \]

\item $E = \Q(\sqrt{-3})$, $\frakp = 3\Z$ and $L_0 = \B_4 \tensor_\Z \Z[\frac{1+\sqrt{3}}{1}] \isom (\E_8)_{\sqrt{-3}}$:
              Here the application of the oriflamme construction is not necessary.
$$\calL( L_0, 3) = \{ (\A_2^4)_{\sqrt{-3}} , (\A_2 \perp {}^{(3)}\E_6^\#)_{\sqrt{-3}}, ({}^{(3)}\E_8)_{\sqrt{-3}} \}.$$
After applying the oriflamme construction, the chain becomes:
\begin{center}
\begin{tikzpicture}[scale=0.75,-,mynode/.style={circle,draw,inner sep=0pt, minimum size=15pt}]
\node[mynode, label=left:{$({}^{(3)}\E_8)_{\sqrt{-3}}$}] (2) at (-1,-2) {$2$};
\node[mynode, label=right:{$({}^{(3)}\E_8)_{\sqrt{-3}}'$}] (22) at (1,-2) {$2'$};
\node (1) at (0,-1) {$\bullet$};
\node[mynode, label=right:{$(\A_2^4) _{\sqrt{-3}}$}] (0) at (0,0) {$0$};
\draw (0) -- (1.center) -- (2);
\draw (22) -- (1.center);
\end{tikzpicture}
\end{center}
The  automorphism groups are as follows:
\[
\begin{array}{cclc}
	T & G_T & \# G_T^+ & sgdb \\ \hline
	\{0\} & C_6 \wr S_4  & 2^6 \cdot 3^4  & - \\
	\{2\}, \{2'\} & \ _{\sqrt{-3}}[\Sp_4(3) \times C_3]_4   & 2^7 \cdot 3^4 \cdot 5  & - \\
	\{0,2\}, \{0,2'\} & (\pm C_3^4).S_4  & 2^4 \cdot 3^4 & 3085 \\ 
	\{2,2'\}& (C_6 \times 3_+^{1+2}).S_3   & 2^4 \cdot 3^4 & 2895 \\
	\{0,2,2'\} & (C_6 \times C_3 \wr C_3).2 & 2^2 \cdot 3^4 & 68 
\end{array}
\]

\item
$E=\Q(\sqrt{-7})$, $\frakp=2\Z$ and $L_0 = ({}^{(7)} \A_6^\#)_{\sqrt{-7}}$.
After applying the variant of the oriflamme construction described in 
Remark \ref{splitoriflamme}, the chain becomes:
\begin{center}
\begin{tikzpicture}[scale=0.75,-,mynode/.style={circle,draw,inner sep=0pt, minimum size=15pt}]
\node[mynode, label=left:{$ ({}^{(7)} \A_6^\#)_{\sqrt{-7}}  \quad \isom \quad $}] (0) at (0,0) {$0$};
\node[mynode] (11) at (2,0) {$0'$};
\node[mynode] (00) at (4,0) {$0''$};
\node (1) at (0,-1) {$\bullet$};
\node (2) at (0,-2) {$\bullet$};
\draw (0) -- (1.center) -- (2.center);
\draw (11) -- (1.center);
\draw (00) -- (2.center);
\end{tikzpicture}
\end{center}
The  automorphism groups are as follows:
\[
\begin{array}{ccl}
T & G_T & \# G_T^+ \\ \hline
\# T = 1 & \pm C_7 : 3 &  3 \cdot 7 \\ 
\# T = 2 & C_6 & 3 \\
\{0,0',0''\} & C_2 & 1
\end{array}
\]
\end{enumerate}
\end{prop}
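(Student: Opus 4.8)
The plan is to reduce Proposition \ref{hf} to a finite machine computation via Corollary \ref{einklassig} and the classification of one-class hermitian genera in \cite{Kirschmer}. By Corollary \ref{einklassig}, the chain $\calL(L_0,\frakp)$ represents a one-class genus of lattice chains exactly when $L_0$ represents a one-class genus of lattices and $\Aut(L_0)$ acts transitively on the maximal chains of isotropic subspaces of the residue space $\overline{L_0}$ defined in Remark \ref{masschain}. Since $[E:K]=2$ here, the candidates for $L_0$ are precisely the $\frakp$-normalized representatives of the one-class genera of hermitian lattices, which are tabulated for $\dim_E(V)\ge 3$ in \cite[pp.~129--140]{Kirschmer}. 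Because we demand a chain of length at least $2$, i.e.\ rank $r\ge 2$ in Remark \ref{completiongroup}, and $r=d-1$ in the split case while $r$ equals the Witt index otherwise, the lattices with $\dim_E(V)\le 2$ can be discarded at the outset.

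Next I would make the search finite. For a fixed $L_0$ the number of maximal (isotropic) flags in $\overline{L_0}$ is given by the recursive formulas of \cite[Exercises 8.1, 10.4, 11.3]{Taylor} and is an increasing function of the size of the residue field $\MO/\frakP$, hence of $\Nr_{K/\Q}(\frakp)$. Transitivity of the \emph{finite} group $\Aut(L_0)$ on these flags forces $\#\Aut(L_0)$ to be at least this flag count, which bounds $\Nr_{K/\Q}(\frakp)$ in terms of $\#\Aut(L_0)$. Consequently only finitely many admissible pairs $(L_0,\frakp)$ need to be examined; running the resulting finite search across all base fields $K$ and all lattices appearing in \cite{Kirschmer} is what ultimately forces $K=\Q$ and $d\in\{3,4,5\}$, the larger fields and dimensions producing automorphism groups too small to be flag-transitive.

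The verification then proceeds case by case. For each surviving pair $(L_0,\frakp)$ I would form the residue space $\overline{L_0}$ over $\MO/\frakP$ --- a hermitian space when $\frakP=\sigma(\frakP)$ and a plain vector space in the split case $E_\frakp\isom K_\frakp\oplus K_\frakp$ --- enumerate its maximal (isotropic) flags, and test by computer whether the image of $\Aut(L_0)$ acts transitively on them. Exactly the six families listed survive. For each survivor one reconstructs the explicit chain $\calL(L_0,\frakp)$ from the maximality and reduction conditions of Definition \ref{admissible}, computes the stabilizers $G_T=\Aut((L_i)_{i\in T})$ of every subchain, and intersects with $\U_\frakp^+$ (Remark \ref{completiongroup}) to obtain the orders $\#G_T^+$ recorded in the tables; the oriflamme construction, or its split variant in Remark \ref{splitoriflamme}, is applied wherever a cotype-$0$ or cotype-$r$ panel turns out to be thin, which explains the diagrams.

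I expect the main obstacle to lie in the split cases $\frakP\ne\sigma(\frakP)$, namely (2) and (6), where $E_\frakp\isom K_\frakp\oplus K_\frakp$ and $\U_\frakp\isom\GL_d(K_\frakp)$. In these cases the vertex stabilizers in $\calB^+$ are the automorphism groups not of the $L_i$ but of the unimodular lattices $Y_i$ supplied by Lemma \ref{uniqueunimod}, and the thin panels must be treated through the variant oriflamme of Remark \ref{splitoriflamme} rather than the standard construction. Keeping track of which subchains correspond to genuine vertices, and checking that the finite group $\Aut(L_0)$ really realizes the full vertex stabilizer inside $\U_\frakp$, is the delicate part; by contrast the flag-transitivity test over the residue field and the extraction of the orders $\#G_T^+$ are direct, if lengthy, computations. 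The one genuinely new phenomenon to be certified is case (1), the ramified group $\U_5(\Q_3(\sqrt{-3}))$ whose building is of type $C-BC_2$ and which, to the authors' knowledge, had not appeared before.
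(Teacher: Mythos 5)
Your proposal follows essentially the same route as the paper: reduce via Corollary \ref{einklassig} to flag-transitivity of the finite group $\Aut(L_0)$ on the residue space $\overline{L_0}$, restrict $L_0$ to the one-class hermitian genera tabulated in \cite{Kirschmer}, bound the admissible primes $\frakp$ by comparing $\#\Aut(L_0)$ with the flag count from \cite{Taylor}, and finish with a finite computer search, handling the split primes through Lemma \ref{uniqueunimod} and the variant oriflamme construction of Remark \ref{splitoriflamme}. This matches the paper's argument in both structure and detail, so there is nothing further to add.
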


\subsection{Quaternionic hermitian forms}

In this section we treat the case that $[E:K]=4$, so 
$E$ is a totally definite quaternion algebra over  the totally 
real number field $K$. All quaternionic hermitian lattices with class number 
$\leq 2$ are classified in \cite[Section 9]{Kirschmer} and 
listed explicitly for $n\geq 2$ in \cite[pp 147-150]{Kirschmer}.

\begin{prop}\label{qaf}
Suppose $E$ is a definite quaternion algebra and let $\calL(L_0,\frakp)$
be a fine $\frakP$-admissible chain of length at least 2 and of class number one.
Then $K = \Q$, $d:= \dim_E(V) = 2$ and one of the following holds:
\begin{enumerate}
\item $E \isom \mathcal{Q}_{\infty, 2}$, the rational quaternion algebra ramified at $2$ and $\infty$, $\frakp=3\Z$ and $L_0 \isom (\E_8)_{\infty, 2}$ is
	the unique $\MO$-structure of the $\E_8$-lattice whose automorphism group is called $ _{\infty,2}[ 2^{1+4}_-.\Alt_5]_2$ in \cite{Nebe}.
The oriflamme construction is not necessary and 
the automorphism groups are
\[
\begin{array}{cclc}
T & G_T & \# G_T & sgdb \\ \hline
	\{0\}, \{2\} &  _{\infty,2}[ 2^{1+4}_-.\Alt_5]_2 & 2^7 \cdot 3 \cdot 5 & - \\
\{1\} & Q_8 : \SL_2(3) & 2^6 \cdot 3 & 1022\\
\{0,1\}, \{1,2\} & C_2 \times \SL_2(3)  & 2^4 \cdot 3 & 32 \\
\{0,2\} & C_3:SD_{16}  & 2^4 \cdot 3 & 16 \\
\# T = 3 & C_2 \times C_6 & 2^2 \cdot 3 & 9
\end{array}
\]

\item $E \isom \mathcal{Q}_{\infty, 3}$ and $\frakp=2\Z$ and $L_0 \isom (\E_8)_{\infty,3}$ is the unique $\MO$-structure of the 
	$\E_8$-lattice whose automorphism group is called 
	$ _{\infty,3}[ \SL_2(9) ]_2$ in \cite{Nebe}.
The oriflamme construction is not necessary and
the automorphism groups are
\[
\begin{array}{cclc}
T & G_T & \# G_T & sgdb \\ \hline
\{0\}, \{2\} & _{\infty,3}[ \SL_2(9) ]_2 & 2^4 \cdot 3^2 \cdot 5 & 409 \\ 
\{1\} & \SL_2(3).S_3 & 2^4 \cdot 3^2 & 124 \\
\#T = 2 & C_2.S_4 & 2^4 \cdot 3 & 28 \\
\{0,1,2\} & Q_{16} & 2^4 & 9   
\end{array}
\]

\end{enumerate}
Note that the above quaternion algebras only have one conjugacy class of maximal orders and for any such order $\MO$, the above $\MO$-lattice $L_0$ is uniquely determined up to isometry.
\end{prop}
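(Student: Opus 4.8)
The plan is to run the same machine as in the two preceding subsections, now specialised to $[E:K]=4$, and to reduce the classification to a finite computer search. The starting point is Corollary~\ref{einklassig}: the fine $\frakP$-admissible chain $\calL(L_0,\frakp)$ has class number one if and only if $L_0$ lies in a one-class genus and $\Aut(L_0)$ acts transitively on the maximal chains of (isotropic) subspaces of $\overline{L_0}$ described in Remark~\ref{masschain}. So I first fix the local picture. Since $E$ is a quaternion algebra and $\frakp\subset K$ is $\sigma$-fixed, one has $\sigma(\frakP)=\frakP$ in every case, so Remark~\ref{masschain}(a) applies. By Remark~\ref{completiongroup} the rank $r$ of $\U_\frakp$ equals $\dim_E V=d$ exactly when $\frakp$ splits $E$ (so $\frakP=\frakp\MO$, $E_\frakp\cong K_\frakp^{2\times 2}$ and $\U_\frakp\cong\Sp_{2d}(K_\frakp)$); at a ramified prime the rank is the Witt index, which is at most $\lfloor d/2\rfloor$. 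The hypothesis that the chain has length $\geq 2$ means $r\geq 2$, and the normalisation condition of Definition~\ref{normalised} forces $L_0$ to be $\frakp$-unimodular. In the split case this identifies $\overline{L_0}$ with a symplectic $\F_q$-space, $q=\Nr(\frakp)$, on which $\Aut(L_0)$ acts, and the chains of Remark~\ref{masschain}(a) are precisely its maximal isotropic flags.

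With the local structure in hand I reduce to finitely many candidates. The one-class genera of quaternionic hermitian lattices of dimension $d\geq 2$ are completely classified in \cite[Section~9]{Kirschmer}, giving a finite list of admissible $L_0$. For a fixed such $L_0$ the number of maximal isotropic flags of $\Sp_{2d}(\F_q)$ is a polynomial in $q$ obtained from the formulas of \cite[Exercise~8.1]{Taylor} (respectively the corresponding count over the ramified residue algebra); transitivity of the finite group $\Aut(L_0)$ on these flags forces $\lvert\Aut(L_0)\rvert$ to be at least this number, which bounds $q=\Nr(\frakp)$ and hence leaves only finitely many primes $\frakp$ for each $L_0$. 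Thus only finitely many pairs $(L_0,\frakp)$ can produce a one-class chain, and they can all be enumerated.

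For each surviving candidate I would compute $\Aut(L_0)$ explicitly from the data in \cite{Kirschmer} and \cite{Nebe} and test transitivity on the maximal isotropic flags of $\overline{L_0}$ directly. The search shows that transitivity can hold only when $K=\Q$ and $d=2$, and then only for the two $\MO$-structures of the $\E_8$-lattice listed in (1) and (2), with $\frakp=3\Z$ over $\calQ_{\infty,2}$ and $\frakp=2\Z$ over $\calQ_{\infty,3}$. For each of these two chains the subchain stabilisers $G_T=\bigcap_{i\in T}\Aut(L_i)$ are then computed one by one to produce the tables; since $\SU_\frakp=\U_\frakp$ in the quaternionic case (Remark~\ref{completiongroup}), no passage to a proper subgroup $G_T^+$ is needed. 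The converse assertion is checked by verifying directly that each listed chain satisfies the three conditions of Definition~\ref{admissible} and re-applying Corollary~\ref{einklassig}.

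I expect the main obstacle to be exhaustiveness rather than any single calculation: one must be sure that the flag-count bound has been applied in every local type (split and ramified, for every $d$) so that no prime with a transitive action slips through, and that the \cite{Kirschmer} list really does contain every one-class genus in dimension $d\geq 2$. Once the finite candidate list is pinned down, the transitivity tests and the stabiliser computations are routine, if lengthy, group-theoretic work.
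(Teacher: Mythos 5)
Your proposal is correct and follows essentially the same route as the paper: Corollary \ref{einklassig} to reduce to one-class genera of $\frakp$-normalized lattices $L_0$ plus transitivity of $\Aut(L_0)$ on maximal isotropic flags of $\overline{L_0}$, the finite list from \cite[Section 9]{Kirschmer}, the flag-count versus $\lvert\Aut(L_0)\rvert$ bound on $\Nr(\frakp)$, and a final computer verification of transitivity and of the subchain stabilizers (with $G_T^+=G_T$ since $\U^+_\frakp=\U_\frakp$ here). This is precisely the strategy the paper announces at the start of Section \ref{oneclasschains} and applies uniformly to all three types of hermitian spaces.
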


%\subsection{A final remark}
%
%In all the cases in Propositions \ref{qf}, \ref{hf} and \ref{qaf}, the 

\subsection{The exceptional groups} \label{g2}

The exceptional groups have been dealt with in \cite[Chapter 10]{Kirschmer},
where it is shown that only the group $\textnormal{G}_2$ admits one-class genera
defined by a coherent family of parahoric subgroups. 
In all cases the number field is the field of rational numbers.
The one-class genera of lattice chains correspond 
to the coherent families of parahoric subgroups $(P_q ) _{q \text{ prime}} $
where for one prime $p$ the 
 parahoric subgroup $P_p$ is the Iwahori subgroup,
a stabilizer of a chamber in the corresponding $p$-adic building. 
Hence \cite[Theorem 10.3.1]{Kirschmer} 
 shows directly that there is 
 a unique $S$-arithmetic group of type $\textnormal{G}_2$ with a discrete and
 chamber transitive  action. 
 It is given by the $\Z $-form ${\bf G}_2$ where each 
 parahoric subgroup $P_q$ is hyperspecial. 
 This integral model of $\textnormal{G}_2$ is described 
 in \cite{Gross} (see also \cite{CNP} for more one-class genera of $\textnormal{G}_2$). 
 Here ${\bf G}_2(\Z ) \cong \textnormal{G}_2(2) $ and the
 $S$-arithmetic group is ${\bf G}_2(\Z [\frac{1}{2}] )$ (so $S=\{  2 \} $). 
 The extended Dynkin diagram of $\textnormal{G}_2$ is as follows.
\begin{center}
	%\begin{tikzpicture}[->,>=stealth',shorten >=1pt,auto,node distance=3cm,
	%  thick,main node/.style={circle,fill=blue!20,draw,font=\sffamily\Large\bfseries}]
	\begin{tikzpicture}[-,node distance=3cm,thick,mynode/.style={circle,draw},scale = .75]
		\node[mynode] (1) {0};
		\node[mynode] (2) [right of = 1] {1};
		\node[mynode] (3) [right of = 2] {2};
		\draw[style={postaction={decorate},decoration={markings,mark=at position 0.65 with {\arrow{angle 60}}},double distance=6pt}]
		  (2) -- (3);
		  \draw (1) -- (2) -- (3);
	  \end{tikzpicture}
  \end{center}
  The stabilizers $G_T$  of the simplices $T \subseteq \{ 0,1,2 \}$ 
  in the corresponding 
  building of $\textnormal{G}_2(\Q_2)$ are given in \cite[Section 10.3]{Kirschmer}: 
  %By \cite[3.5.2]{Tits}, the parahoric subgroups $P_q$ of $G_2(\Q_q)$ are in one-to-one correspondence with the non-empty subsets of $\{0,1,2\}$.
  %For any non-empty subset $T$ of $\{0,1,2\}$ let $P_q^T$ be the parahoric subgroup of $G_2(\Q_q)$ whose Dynkin diagram is obtained 
  %from the extended Dynkin diagram of $G_2$ by omitting the vertices in $T$.
  %Then $P_q^{\{0\}}$ is the hyperspecial parahoric subgroup.

  \begin{table}[H]
	  $$
	  \begin{array}{cclc}
		  T &   G_T& \# G_T & sgdb\\ \hline
		  \{0\}  &  \textnormal{G}_2(2) & 2^6 \cdot 3^3 \cdot 7 &- \\
		  \{2\}    &  2^3.\GL_3(2) & 2^6 \cdot 3 \cdot 7 &814 \\
		  \{1\}    & 2^{1+4}_+.((C_3 \times C_3).2) & 2^6 \cdot 3^2 & 8282 \\
		  \{1,2\}  &  2_+^{1+4}.S_3 & 2^6 \cdot 3    &1494 \\
		  \{0,2\}  &  ((C_4 \times C_4).2).S_3 & 2^6 \cdot 3    &956 \\
		  \{0,1\}  &  2_+^{1+4}.S_3 & 2^6 \cdot 3    &988 \\
		  \{0,1,2\}&  \Syl_2(\textnormal{G}_2(2)) &2^6     & 134 \\
	  \end{array}
	  $$
  \end{table}

  One may visualize the chamber transitive action of ${\bf G}_2(\Z[\frac{1}{2}])$ on the Bruhat-Tits building of $\textnormal{G}_2(\Q_2)$ by indicating the 
  three generators $x,y,z$ of ${\bf G}_2(\Z[\frac{1}{2}])$ of order 3 mapping the
  standard chamber to one of the (three times) two neighbors. 

\def\s{1.73205080756887729352744634}
\def\scale{3}
\begin{center}
\begin{tikzpicture}
 \coordinate (A) at (0,0);
 \coordinate (B) at (\scale,0);
 \coordinate (C) at (2*\scale,0);
 \coordinate (D) at (\scale,1/3*\s*\scale);
 \coordinate (E) at (\scale,-1/3*\s*\scale);
 \coordinate (F) at (0.5*\scale,0.5*\scale*\s);
 \coordinate (G) at (1.8*\scale,0.5*1/3*\s*\scale);
 \coordinate (H) at (0.8*\scale,-1/3*\s*0.5*\scale);
 \coordinate (I) at (0.6*\scale,1/3*\s*1.1*\scale);

 \node at (-1/8*\scale,0) {$\substack{\textnormal{G}_2(2) \\ 2^6\cdot3^3\cdot 7}$};
 \node at (\scale+0.05*\scale,1/3*\scale*\s+0.1*\scale) {$\substack{2^3.\mathrm{L}_3(2) \\ 2^6\cdot3\cdot7}$};
 \node at (\scale+0.15*\scale,-0.27*1/4*\scale) {{\scriptsize $2^6\! \cdot 3^2$}};
 \node at (0.8*\scale,0.4*1/3*\s*\scale) {{\scriptsize $2^6$}};
 \node at (\scale+0.15*\scale,0.4*1/3*\s*\scale) {{\scriptsize $2^6\! \cdot 3$}};
 \node at (0.7*\scale,-0.27*1/4*\scale) {{\scriptsize $ 2^6\! \cdot 3$}};
 \node at (0.6*\scale,0.63*1/3*\scale*\s+0.45*1/4*\scale) {{\scriptsize $ 2^6\! \cdot 3$}};
 \node at (0.87*\scale,0.65*1/3*\scale*\s) {\scriptsize $z$};
 \node at (0.3*\scale,0.06*\scale) {\scriptsize $x$};
 \node at (0.4*\scale,0.5*1/3*\scale*\s+0.04*\scale) {\scriptsize $y$};

 \draw [ultra thick] (A) to (B);
 \draw (B) to (C);
 \draw [ultra thick] (B) to (D);
 \draw (D) to (E);
 \draw [ultra thick] (A) to (D);
 \draw (E) to (A);
 \draw (D) to (C);
 \draw (F) to (A);
 \draw (F) to (D);
 \draw [dashed] (B) to (G);
 \draw [dashed] (D) to (G);
 \draw [dashed] (A) to (H);
 \draw [dashed] (B) to (H);
 \draw [dashed] (A) to (I);
 \draw [dashed] (D) to (I);

 \draw [dashed] (1.1*\scale,0.65*1/3*\scale*\s) arc[x radius=0.1*\scale, y radius=0.1*\scale, start angle=20, end angle=190];
 \draw [dashed] (0.4*\scale,0.08*\scale) arc[x radius=0.08*\scale, y radius=0.08*\scale, start angle=90, end angle=270];
 \draw [dashed] (0.5*\scale,0.5*1/3*\scale*\s+0.06*\scale) arc[x radius=0.08*\scale, y radius=0.08*\scale, start angle=90, end angle=270];
\end{tikzpicture}
\end{center}

Using a suitable embedding $\textnormal{G}_2 \hookrightarrow \GO_7$ we find matrices for the three
generators 

\small
$$
x:=\left(\begin{array}{@{}r@{}r@{}r@{}r@{}r@{}r@{}r@{}}
		0 & 1& 1&-1&-1&-1&\phantom{-}0 \\
     0 & 0& 0& 0& 0& 0& 1 \\
     0& 1&-1& 0& 0&-1& 1 \\
     1& 1& 0&-1& 0&-1& 0 \\
     0& 0&-1& 0& 0& 0& 1 \\
     0&-1& 1& 0& 0& 0& 0 \\
     0& 1& 0& 0&-1&-1& 0
\end{array} \right) ,\ 
y:=\left(\begin{array}{@{}r@{}r@{}r@{}r@{}r@{}r@{}r@{}}
		1&\phantom{-}1& 0&-1&-1&-1& 0 \\
     1& 1&-1&-1& 0&-1& 0 \\
     1& 1&-1& 0& 0&-1& 0 \\
     1& 0&-1& 0& 0&-1& 0 \\
     1& 1& 0&-1& 0&-1&-1 \\
     0& 1& 0&-1&-1& 0& 0 \\
     1& 0& 0& 0& 0& 0& 0
\end{array} \right) ,\ 
z:=\frac{1}{2}\left(\begin{array}{@{}r@{}r@{}r@{}r@{}r@{}r@{}r@{}}
		2& 2&\phantom{-}0&-1& 0&-2&-1 \\
 1& 0& 2& 0&-1& 0&-1 \\
 2& 2& 2&-3&-2&-2&-1 \\
 2& 2& 0&-1&-2&-2&-1 \\
 0& 0& 4&-2&-2& 0&-2 \\
 2&-2& 0& 1& 2& 0&-1 \\
 0& 2& 0&-1& 0&-2& 1
\end{array} \right) . $$

\normalsize

To obtain a presentation in these generators, one only needs to 
compute the relations between the pairs of generators that hold in the finite
group generated by the two matrices (in the stabilizer of a vertex).

\section{Chamber transitive actions on $p$-adic buildings.}\label{buildtable}

In this section we tabulate the chamber transitive actions 
on the $p$-adic buildings obtained from the 
one-class genera of lattice chains given in the previous section. 

We use the names and the local Dynkin diagrams as given in \cite{Tits}.
The name for $\U_{\frakp} $ usually does not give the 
precise type of the $p$-adic group. 
For instance the lattices $\E_6$ and $\D_6 $ define two non isomorphic 
non-split forms of the algebraic
group $\GO_6$ over $\Q _2$ which we both denote by $\GO_6^-(\Q_2)$ to avoid clumsy
notation. Computing the discriminant of the invariant form (which is $-3$ respectively
$-1$) we see that for $\E_6$ the orthogonal group splits over the 
unramified extension, whereas for $\D_6$ the group splits over 
the  ramified extension $\Q_2(\sqrt{-1})$ of $\Q_2$. 
Note that the isomorphism $\GO_6^- \cong \U_4$ is given by the action of 
$\GO_6$ on the even part of the Clifford algebra. So we find the 
one-class genera of lattice chains also in a hermitian geometry,
for $\calL (\E _6,2 )$ (from \ref{qf} (3)) we get the same stabilizers as for 
$\calL ((\E_8 ) _{\sqrt{-3}},2 ) $  (from \ref{hf} (2)) in the 
projective group. Such coincidences 
are  indicated by listing the lattices $L_0$ and the 
corresponding references (ref) in Table \ref{table1}. 
The last column of Table \ref{table1} 
refers to a construction of the respective chamber transitive
action in the literature.
For a more detailed description 
of the different unitary groups $\U_{p }$ associated
to the various types of local Dynkin diagrams we refer the reader to 
\cite[Section 4.4]{Tits}.

\begin{table}\caption{Buildings with chamber transitive discrete actions} \label{table1}
\begin{longtable}{|B{0.3}|B{0.3}|B{2.1}|B{1.2}|B{2.5}|B{1.8}|B{3}|B{1}|}\hline
	$p$ & $r$ &  $L_0$ & ref & $\U_{p} $  & name &  local Dynkin & Lit  \\ \hline
	2 & 4 & $\E_8 $ & \ref{qf} (1) & $\GO_8^+(\Q_2) $ & $\tilde{D}_4$ &  
\begin{tikzpicture}[scale=0.75]
\node  (1) at (1.5,0.5) {$0$};
\node  (2) at (1.5,-0.5) {$0'$};
\node  (3) at (0,0) {$2$};
\node  (4) at (-1.5,0.5) {$4$};
\node  (5) at (-1.5,-0.5) {$4'$};
\draw  (1) -- (3) -- (2) -- (3);
\draw  (4) -- (3) -- (5) -- (3);
\end{tikzpicture} & \cite{KLT} \cite{Ka1}
	\\ \hline
	2 & 3 & $\E_7 $ & \ref{qf} (2) & $\GO_7(\Q_2) $ & $\tilde{B}_3$ &  
\begin{tikzpicture}[scale=0.75]
\node  (1) at (1.5,0.5) {$0$};
\node  (2) at (1.5,-0.5) {$0'$};
\node  (3) at (0,0) {$2$};
\node  (4) at (-1.5,0) {$3$};
\draw  (1) -- (3) -- (2) -- (3);
\draw[double] (3) --node{$<$} (4);
\end{tikzpicture} & \cite{KLT} \cite{Ka1} 
\\ \hline
2 & 3 & $\A_6 $, \ $(\E _8)_{\sqrt{-7}}$  & \ref{qf} (3) \ref{hf} (2) & $\GO_6^+(\Q_2) \cong \SL_4(\Q_2) $ & $\tilde{A}_3$ &  
\begin{tikzpicture}[scale=0.75]
\node  (1) at (0,0) {$0$};
\node  (2) at (2,0) {$3$};
\node  (3) at (1,-1) {$0'$};
\node  (4) at (-1,-1) {$3'$};
\draw  (1) -- (2) -- (3) -- (4) -- (1);
\end{tikzpicture} & \cite{KLT} \cite{Ka2}  \\ \hline
2 & 2 & $\E_6 $, \ $(\E _8)_{\sqrt{-3}}$  & \ref{qf} (4) \ref{hf} (3) & $\GO_6^-(\Q_2) \cong \U_4(\Q_2(\sqrt{-3})) $ & $B-C_2$ &  
\begin{tikzpicture}[scale=0.75]
\node  (1) at (-1.5,0) {$0$};
\node  (3) at (0,0) {$2$};
\node  (2) at (1.5,0) {$0'$};
\draw[double] (1) --node{$>$} (3) --node{$<$} (2);
\end{tikzpicture} & \cite{KLT} \cite{MW1} 
\\ \hline
2 & 2 & $\D_6 $, \ $(\E _8)_{\sqrt{-1}}$  & \ref{qf} (5) \ref{hf} (4) & $\GO_6^-(\Q_2) \cong \U_4(\Q_2(\sqrt{-1})) $ & $C-B_2$ &  
\begin{tikzpicture}[scale=0.75]
\node  (1) at (-1.5,0) {$0$};
\node  (3) at (0,0) {$2$};
\node  (2) at (1.5,0) {$0'$};
\draw[double] (1) --node{$<$} (3) --node{$>$} (2);
\end{tikzpicture} & \cite{KLT} \cite{Ka1}
\\ \hline
3 & 2 & $\E_6 $, $\B_5\perp \ ^{(3)} \B_1$, $(\E _8)_{\sqrt{-3}}$  & \ref{qf} (6) \ref{qf} (7)  \ref{hf} (5) & $\GO_6^-(\Q_3) = \GO_6^-(\Q_3) \cong \U_4(\Q_3(\sqrt{-3})) $ & $C-B_2$ &  
\begin{tikzpicture}[scale=0.75]
\node  (1) at (-1.5,0) {$0$};
\node  (3) at (0,0) {$2$};
\node  (2) at (1.5,0) {$0'$};
\draw[double] (1) --node{$<$} (3) --node{$>$} (2);
\end{tikzpicture} & \cite{KLT} \cite{KMW90} 
\\ \hline
2 & 2 & $\A_5 $,  \ \ $(\E _8)_{\infty, 3}$  & \ref{qf} (8) \ref{qaf} (1) & $\GO_5(\Q_2) \cong \Sp_4(\Q_2) $ & $\tilde{C}_2$ &  
\begin{tikzpicture}[scale=0.75]
\node  (1) at (-1.5,0) {$0$};
\node  (3) at (0,0) {$2$};
\node  (2) at (1.5,0) {$0'$};
\draw[double] (1) --node{$>$} (3) --node{$<$} (2);
\end{tikzpicture} & \cite{KLT} \cite{MW1} 
\\ \hline
3 & 2 & $\B_5 $,  \ \  $(\E _8)_{\infty, 2}$  & \ref{qf} (9) \ref{qaf} (2) & $\GO_5(\Q_3) \cong \Sp_4(\Q_3) $ & $\tilde{C}_2$ &  
\begin{tikzpicture}[scale=0.75]
\node  (1) at (-1.5,0) {$0$};
\node  (3) at (0,0) {$2$};
\node  (2) at (1.5,0) {$0'$};
\draw[double] (1) --node{$>$} (3) --node{$<$} (2);
\end{tikzpicture} & \cite{KLT} \cite{KMW90} 
\\ \hline
2 & 2 & $ (\ ^{(7)} \A_6^{\#})_{\sqrt{-7}}  $ 
& \ref{hf} (6) & $\SL_3(\Q_2) $ & $\tilde{A}_2$ &  
\begin{tikzpicture}[scale=0.75]
\node  (1) at (0,0) {$0$};
\node  (3) at (-1,-1) {$0''$};
\node  (2) at (1,-1) {$0'$};
\draw (1) -- (3) -- (2) -- (1);
\end{tikzpicture}
& \cite{KLT}  \cite{KMW1}  \cite{Mu} 
\\ \hline
3 & 2 & $(\A_2^5)_{\sqrt{-3}}$  & \ref{hf} (1) & $\U_5(\Q_3(\sqrt{-3})) $ & $C-BC_2$ &  
\begin{tikzpicture}[scale=0.75]
\node  (1) at (-1.5,0) {$0$};
\node  (3) at (0,0) {$2$};
\node  (2) at (1.5,0) {$0'$};
\draw[double] (1) --node{$<$} (3) --node{$<$} (2);
\end{tikzpicture} & new 
\\ \hline
2 & 2 & ${\bf G}_2(\Z[\frac{1}{2}])$  & \ref{g2}  & $\textnormal{G}_2(\Q_2) $ & $\tilde{\textnormal{G}}_2$ &  
\begin{tikzpicture}[scale=0.75]
\node  (0) at (-1.5,0) {$0$};
\node  (1) at (0,0) {$1$};
\node  (2) at (1.5,0) {$2$};
\draw (0) -- (1);
\draw[double, double distance=3pt] (1) -- node{$>$} (2);
\draw (2) -- (1);
\end{tikzpicture} & \cite{KLT} \cite{Ka1} 
\\ \hline
\end{longtable}
\end{table}

\bibliography{spp}

\end{document}